\newtheorem{theorem}{Theorem}
\newtheorem{remark}[theorem]{Remark}
\newenvironment{proof}[1][Proof]{\noindent\textbf{#1.} }{\ \rule{0.5em}{0.5em}}
\journal{Journal of Mathematical Analysis and Applications}
\begin{document}

\begin{frontmatter}

%% Title, authors and addresses

%% use the tnoteref command within \title for footnotes;
%% use the tnotetext command for theassociated footnote;
%% use the fnref command within \author or \address for footnotes;
%% use the fntext command for theassociated footnote;
%% use the corref command within \author for corresponding author footnotes;
%% use the cortext command for theassociated footnote;
%% use the ead command for the email address,
%% and the form \ead[url] for the home page:
%% \title{Title\tnoteref{label1}}
%% \tnotetext[label1]{}
%% \author{Name\corref{cor1}\fnref{label2}}
%% \ead{email address}
%% \ead[url]{home page}
%% \fntext[label2]{}
%% \cortext[cor1]{}
%% \address{Address\fnref{label3}}
%% \fntext[label3]{}

\title{Closed-form expressions for derivatives of Bessel functions with
respect to the order}

%% use optional labels to link authors explicitly to addresses:
%% \author[label1,label2]{}
%% \address[label1]{}
%% \address[label2]{}

\author{J. L. Gonz\'{a}lez-Santander}

\address{C/ Ovidi Montllor i Mengual 7, pta. 9 \\
46017 Valencia, Spain \\
juanluis.gonzalezsantander@gmail.com}

\begin{abstract}
%% Text of abstracta
We have used recent integral representations of the derivatives of the Bessel functions with respect to the order to obtain closed-form expressions in terms of generalized hypergeometric functions and Meijer-$G$ functions.
Also, we have carried out similar calculations for the derivatives of the modified Bessel functions with respect to the order, obtaining closed-form expressions as well. For this purpose, we have obtained integral representations of the 
derivatives of the modified Bessel functions with respect to the order. As by-products, we have calculated two non-tabulated integrals.
\end{abstract}

\begin{keyword}
%% keywords here, in the form: keyword \sep keyword
Bessel functions \sep modified Bessel functions \sep generalized hypergeometric functions \sep Meijer-$G$\ function
%% PACS codes here, in the form: \PACS code \sep code

%% MSC codes here, in the form: \MSC code \sep code
\MSC 33C10 \sep 33C20 \sep 33E20
%% or \MSC[2008] code \sep code (2000 is the default)

\end{keyword}

\end{frontmatter}

%% \linenumbers

%% main text
\section{Introduction}

The Bessel functions have had many applications since F. W. Bessel
(1784-1846) found this kind of functions in his studies of planetary motion.
In Physics, these functions arise naturally in boundary value problems
of potential theory for cylindrical domains \cite[Chap.6]{Lebedev}. In
Mathematics, the Bessel functions are encountered in the theory of
differential equations with turning points, and well as with poles \cite[%
Sect. 10.72]{DLMF}. Thereby, the theory of Bessel functions has been studied
extensively in many classical textbooks \cite{Watson, Andrews}.

Usually, the definition of the Bessel function of the first kind $J_{\nu
}\left( z\right) $ and the modified Bessel function $I_{\nu }\left( z\right) 
$ are given in series form as follows:\ 
\begin{equation}
J_{\nu }\left( z\right) =\left( \frac{z}{2}\right) ^{\nu }\sum_{k=0}^{\infty
}\frac{\left( -1\right) ^{k}\left( z/2\right) ^{2k}}{k!\, \Gamma \left( \nu
+k+1\right) },  \label{Jnu_def}
\end{equation}%
and%
\begin{equation}
I_{\nu }\left( z\right) =\left( \frac{z}{2}\right) ^{\nu }\sum_{k=0}^{\infty
}\frac{\left( z/2\right) ^{2k}}{k!\, \Gamma \left( \nu +k+1\right) }.
\label{Inu_def}
\end{equation}

The Bessel function of the second kind $Y_{\nu }\left( z\right) $ is defined
in terms of the Bessel function of the first kind as 
\begin{equation}
Y_{\nu }\left( z\right) =\frac{J_{\nu }\left( z\right) \cos \pi \nu -J_{-\nu
}\left( z\right) }{\sin \pi \nu },\qquad \nu \notin 
%TCIMACRO{\U{2124} }%
%BeginExpansion
\mathbb{Z}
%EndExpansion
,  \label{Ynu_def}
\end{equation}%
and similarly, for the Macdonald function $K_{\nu }\left( z\right) $, we have%
\begin{equation}
K_{\nu }\left( z\right) =\frac{\pi }{2}\frac{I_{-\nu }\left( z\right)
-I_{\nu }\left( z\right) }{\sin \pi \nu },\qquad \nu \notin 
%TCIMACRO{\U{2124} }%
%BeginExpansion
\mathbb{Z}
%EndExpansion
.  \label{Knu_def}
\end{equation}

Despite the fact, that the literature about the Bessel functions is very
large as mentioned before, the literature regarding the derivatives of
 $J_{\nu }$, $Y_{\nu }$, $I_{\nu }$ and $K_{\nu }$ with
respect to the order $\nu $ is relatively scarce. For instance, for $\nu
=\pm 1/2$ we find expressions for the order derivatives in terms of the
exponential integral $\mathrm{Ei}\left( z\right) $ and the sine and cosine
integrals, $\mathrm{Ci}\left( z\right) $ and $\mathrm{Si}\left( z\right) $ 
\cite{Oberhettiger, Brychov}. By using the recurrence relations of Bessel
 \cite[Eqn. 10.6.1]{DLMF} and modified Bessel functions \cite[Eqn.
5.7.9]{Lebedev}, we can derive expressions for half-integral order $\nu
=n\pm 1/2$. Also, for integral order $\nu =n$ we find some series
representations in \cite{Brychov}. For arbitrary order, we have the
following series representations \cite[Eqns. 10.15.1 \& 10.38.1]{DLMF}%
\begin{equation}
\frac{\partial J_{\nu }\left( z\right) }{\partial \nu }=J_{\nu }\left(
z\right) \log \left( \frac{z}{2}\right) -\left( \frac{z}{2}\right) ^{\nu
}\sum_{k=0}^{\infty }\frac{\psi \left( \nu +k+1\right) \left( -1\right)
^{k}\left( z/2\right) ^{2k}}{k!\, \Gamma \left( \nu +k+1\right) },
\label{DJnu_series}
\end{equation}%
and%
\begin{equation}
\frac{\partial I_{\nu }\left( z\right) }{\partial \nu }=I_{\nu }\left(
z\right) \log \left( \frac{z}{2}\right) -\left( \frac{z}{2}\right) ^{\nu
}\sum_{k=0}^{\infty }\frac{\psi \left( \nu +k+1\right) \left( z/2\right)
^{2k}}{k!\, \Gamma \left( \nu +k+1\right) },  \label{DInu_series}
\end{equation}%
which are obtained directly from (\ref{Jnu_def})\ and (\ref{Inu_def}). Also,
from (\ref{Ynu_def})\ and (\ref{Knu_def}), we can calculate the order
derivative of $Y_{\nu }$ and $K_{\nu }$ as \cite[Eqns 10.15.2 \& 10.38.2]%
{DLMF}, 
\begin{equation}
\frac{\partial Y_{\nu }\left( z\right) }{\partial \nu }=\cot \pi \nu \left[ 
\frac{\partial J_{\nu }\left( z\right) }{\partial \nu }-\pi \ Y_{\nu }\left(
z\right) \right] -\csc \pi \nu \,\frac{\partial J_{-\nu }\left( z\right) }{%
\partial \nu }-\pi \ J_{\nu }\left( z\right) ,  \label{DYnu_(DJnu)}
\end{equation}%
and%
\begin{equation}
\frac{\partial K_{\nu }\left( z\right) }{\partial \nu }=\frac{\pi }{2}\csc
\pi \nu \left[ \frac{\partial I_{-\nu }\left( z\right) }{\partial \nu }-%
\frac{\partial I_{\nu }\left( z\right) }{\partial \nu }\right] -\pi \cot \pi
\nu \,K_{\nu }\left( z\right) .  \label{DKnu_(DInu)}
\end{equation}

Although we can accelerate the convergence of the alternating series given
in (\ref{DJnu_series}) by using Cohen-Villegas-Zagier algorithm \cite%
{CohenVillegasZagier}, this series does not converge properly for large $z$,
and it is not useful from a numeric point of view. Also, the series given in
(\ref{DInu_series})\ is not useful for large $z$ as well.

Nonetheless, in the literature we find integral representations of $J_{\nu
}\left( z\right) $ and $I_{\nu }\left( z\right) $ in \cite{Apelblat}, which
read as,%
\begin{equation}
\frac{\partial J_{\nu }\left( z\right) }{\partial \nu }=\pi \nu
\int_{0}^{\pi /2}\tan \theta \ Y_{0}\left( z\sin ^{2}\theta \right) J_{\nu
}\left( z\cos ^{2}\theta \right) d\theta ,\qquad \mathrm{Re}\,\nu >0,
\label{DJnu_int_Apelblat}
\end{equation}%
and%
\begin{equation}
\frac{\partial I_{\nu }\left( z\right) }{\partial \nu }=-2\nu \int_{0}^{\pi
/2}\tan \theta \ K_{0}\left( z\sin ^{2}\theta \right) I_{\nu }\left( z\cos
^{2}\theta \right) d\theta ,\qquad \mathrm{Re}\,\nu >0.
\label{DInu_int_Apelblat}
\end{equation}

Recently, new integral representations of $J_{\nu }\left( z\right) $ and $%
Y_{\nu }\left( z\right) $ are given in \cite{Dunster} for $\nu >0$, $%
\left\vert \mathrm{arg}\ z\right\vert \leq \pi $, and $z\neq 0$:%
\begin{equation}
\frac{\partial J_{\nu }\left( z\right) }{\partial \nu }=\pi \nu \left[
Y_{\nu }\left( z\right) \int_{0}^{z}\frac{J_{\nu }^{2}\left( t\right) }{t}%
dt+J_{\nu }\left( z\right) \int_{z}^{\infty }\frac{J_{\nu }\left( t\right)
Y_{\nu }\left( t\right) }{t}dt\right] ,  \label{DJnu_int_Dunster}
\end{equation}%
and%
\begin{eqnarray}
&&\frac{\partial Y_{\nu }\left( z\right) }{\partial \nu }
\label{DYnu_int_Dunster} \\
&=&\pi \nu \left[ J_{\nu }\left( z\right) \left( \int_{z}^{\infty }\frac{%
Y_{\nu }^{2}\left( t\right) }{t}dt-\frac{1}{2\nu }\right) -Y_{\nu }\left(
z\right) \int_{z}^{\infty }\frac{J_{\nu }\left( t\right) Y_{\nu }\left(
t\right) }{t}dt\right] .  \notag
\end{eqnarray}

It is worth noting that \cite{Dunster} does not state the following direct
result from (\ref{DJnu_int_Dunster})\ and (\ref{DYnu_int_Dunster}), 
\begin{eqnarray}
&&\frac{\partial }{\partial \nu }\left( J_{\nu }\left( z\right) Y_{\nu
}\left( z\right) \right)  \label{D(Jnu*Ynu)_int} \\
&=&\pi \nu \left[ Y_{\nu }^{2}\left( z\right) \int_{0}^{z}\frac{J_{\nu
}^{2}\left( t\right) }{t}dt+J_{\nu }^{2}\left( z\right) \left(
\int_{z}^{\infty }\frac{Y_{\nu }^{2}\left( t\right) }{t}dt-\frac{1}{2\nu }%
\right) \right] .  \notag
\end{eqnarray}

However, in \cite{BrychovNew}, we found $\partial J_{\nu }/\partial \nu $\ in
closed-form as,%
\begin{eqnarray}
&&\frac{\partial J_{\nu }\left( z\right) }{\partial \nu }
\label{D_J_nu_Brychov} \\
&=&\frac{\pi \left[ Y_{\nu }\left( z\right) -\cot \pi \nu \ J_{\nu }\left(
z\right) \right] }{2\, \Gamma ^{2}\left( \nu +1\right) }\left( \frac{z}{2}%
\right) ^{2\nu }\,_{2}F_{3}\left( \left. 
\begin{array}{c}
\nu ,\nu +\frac{1}{2} \\ 
\nu +1,\nu +1,2\nu +1%
\end{array}%
\right\vert -z^{2}\right)  \notag \\
&&+J_{\nu }\left( z\right) \left[ \frac{1}{2\nu }-\psi \left( \nu +1\right)
+\log \left( \frac{z}{2}\right) 
\begin{array}{c}
%TCIMACRO{\TeXButton{TeX field}{\displaystyle} }%
%BeginExpansion
\displaystyle
%EndExpansion
\\ 
%TCIMACRO{\TeXButton{TeX field}{\displaystyle}}%
%BeginExpansion
\displaystyle%
%EndExpansion
\end{array}%
\right.  \notag \\
&&\quad +\left. \frac{z^{2}}{4\left( \nu ^{2}-1\right) }\,_{3}F_{4}\left(
\left. 
\begin{array}{c}
1,1,\frac{3}{2} \\ 
2,2,2-\nu ,2+\nu%
\end{array}%
\right\vert -z^{2}\right) \right] .  \notag
\end{eqnarray}%
Also, from (\ref{D_J_nu_Brychov}), the derivatives of $Y_{\nu }\left( z\right) 
$, $I_{\nu }\left( z\right) $ and $K_{\nu }\left( z\right) $ with respect to
the order are calculated. Nevertheless, in \cite{BrychovNew}, the calculation of (%
\ref{D_J_nu_Brychov})\ relies mostly on symbolic computer algebra. Since
this calculation is highly non-trivial, the aim of this paper is to
provide such calculation. For this purpose, we calculate the integrals given
in (\ref{DJnu_int_Dunster}) and (\ref{DYnu_int_Dunster}). Moreover, we derive integral
representations similar to (\ref{DJnu_int_Dunster})\ and (\ref%
{DYnu_int_Dunster})\ for the modified Bessel functions $%
I_{\nu }$ and $K_{\nu }$, wherein the integrals are calculated in
closed-form as well. Therefore, the scope of this paper is the
calculation of all these integrals to justify the closed-form
expressions of the order derivatives of the Bessel and modified Bessel
functions found in the literature. Also, since these closed-form expressions cannot be applied for $\nu \in 
%TCIMACRO{\U{2124} }%
%BeginExpansion
\mathbb{Z}
%EndExpansion
$, alternative expressions in terms of Meijer-$G$\ functions are calculated
as well.

We organize this article as follows. In Section \ref{Section: Bessel
functions}, we calculate the integrals that appear in (\ref{DJnu_int_Dunster})\
and (\ref{DYnu_int_Dunster}). For this purpose, we introduce the generalized
hypergeometric function and its asymptotic behavior to rewrite (\ref%
{DJnu_int_Dunster})-(\ref{D(Jnu*Ynu)_int})\ in closed-form. In Section \ref%
{Section: Modified Bessel functions},\  we derive integral representations of $\partial
I_{\nu }/\partial \nu $ and $\partial K_{\nu }/\partial \nu $\ similar to (%
\ref{DJnu_int_Dunster}) and (\ref{DYnu_int_Dunster}). We calculate the integrals of these integral representations to obtain the derivatives of the modified Bessel functions with respect to the order in closed-form. Section \ref{Section:
Meijer_G}\ is devoted to these same calculations, but in terms of Meijer-$G$
functions. Finally, we collect the conclusions in Section \ref{Section:
Conclusions}.

\section{Order derivatives for Bessel functions\label{Section: Bessel
functions}}

To calculate the integrals given in (\ref{DJnu_int_Dunster}) and (%
\ref{DYnu_int_Dunster}),\ we have to introduce the generalized
hypergeometric function:\ 
\begin{equation}
_{p}F_{q}\left( \left. 
\begin{array}{c}
a_{1},\ldots ,a_{p} \\ 
b_{1},\ldots ,b_{q}%
\end{array}%
\right\vert z\right) =\sum_{k=0}^{\infty }\frac{\left( a_{1}\right)
_{k}\cdots \left( a_{p}\right) _{k}}{\left( b_{1}\right) _{k}\cdots \left(
b_{q}\right) _{k}}\frac{z^{k}}{k!},  \label{p_F_q_def}
\end{equation}%
where $\left( \alpha \right) _{k}$ denotes the Pochhammer symbol \cite[Eqn.
5.2.5]{DLMF}, 
\begin{equation}
\left( \alpha \right) _{k}=\frac{\Gamma \left( \alpha +k\right) }{\Gamma
\left( \alpha \right) }.  \label{Pochhamer_def}
\end{equation}

Next, we present an equivalent way to define a hypergeometric function \cite[%
Sect. 2.1]{Andrews}. Any series 
\begin{equation*}
\sum_{k=0}^{\infty }c_{k},
\end{equation*}%
that satisfies 
\begin{equation}
\frac{c_{k+1}}{c_{k}}=\frac{\left( k+a_{1}\right) \cdots \left(
k+a_{p}\right) z}{\left( k+1\right) \left( k+b_{1}\right) \cdots \left(
k+b_{q}\right) },  \label{c_k+1/c_k}
\end{equation}%
defines a hypergeometric series 
\begin{equation}
\sum_{k=0}^{\infty }c_{k}=c_{0}\,_{p}F_{q}\left( \left. 
\begin{array}{c}
a_{1},\ldots ,a_{p} \\ 
b_{1},\ldots ,b_{q}%
\end{array}%
\right\vert z\right) .  \label{p_F_q_c_k}
\end{equation}

The first integral of (\ref{DJnu_int_Dunster})\ can be calculated
straightforwardly from the following tabulated integral \cite[Eqn. 1.8.3]%
{Prudnikov2}:%
\begin{eqnarray}
&&\int_{0}^{x}t^{\lambda }J_{\nu }\left( t\right) J_{\mu }\left( t\right) dt=%
\frac{x^{\lambda +\mu +\nu +1}}{2^{\mu +\nu }\left( \lambda +\mu +\nu
+1\right) \Gamma \left( \mu +1\right) \Gamma \left( \nu +1\right) }
\label{Int_Prudnikov} \\
&&\qquad \qquad \qquad \qquad \times \,_{3}F_{4}\left( \left. 
\begin{array}{c}
\frac{\mu +\nu +1}{2},\frac{\mu +\nu +2}{2},\frac{\lambda +\mu +\nu +1}{2}
\\ 
\mu +1,\nu +1,\mu +\nu +1,\frac{\lambda +\mu +\nu +3}{2}%
\end{array}%
\right\vert -x^{2}\right)  \notag \\
&&\mathrm{Re}\left( \lambda +\mu +\nu \right) >-1,  \notag
\end{eqnarray}%
thus, if $\nu >0$, we have%
\begin{equation}
\int_{0}^{z}\frac{J_{\nu }^{2}\left( t\right) }{t}dt=\frac{\left( z/2\right)
^{2\nu\ }}{2\nu \, \Gamma ^{2}\left( \nu +1\right) }\,_{2}F_{3}\left( \left. 
\begin{array}{c}
\nu ,\nu +\frac{1}{2} \\ 
\nu +1,\nu +1,2\nu +1%
\end{array}%
\right\vert -z^{2}\right) .  \label{Int_Jnu^2b}
\end{equation}

The second integral in (\ref{DJnu_int_Dunster})\ is calculated as follows.

\begin{theorem}
If $z\neq 0$, $\left\vert \mathrm{arg}\ z\right\vert <\pi $ and $\nu >0$, $%
\nu \notin 
%TCIMACRO{\U{2124} }%
%BeginExpansion
\mathbb{Z}
%EndExpansion
$, the following integral holds true: 
\begin{eqnarray}
&&\int_{z}^{\infty }\frac{J_{\nu }\left( t\right) Y_{\nu }\left( t\right) }{t%
}dt  \label{Int_Jnu_Ynu} \\
&=&\frac{-1}{\pi \nu }\left[ \log \left( \frac{2}{z}\right) +\psi \left( \nu
\right) +\frac{1}{2\nu }%
\begin{array}{c}
%TCIMACRO{\TeXButton{TeX field}{\displaystyle} }%
%BeginExpansion
\displaystyle
%EndExpansion
\\ 
%TCIMACRO{\TeXButton{TeX field}{\displaystyle}}%
%BeginExpansion
\displaystyle%
%EndExpansion
\end{array}%
\right.  \notag \\
&&+\frac{\pi \cot \pi \nu \ \left( z/2\right) ^{2\nu }}{2\, \Gamma ^{2}\left(
\nu +1\right) }\,_{2}F_{3}\left( \left. 
\begin{array}{c}
\nu ,\nu +\frac{1}{2} \\ 
\nu +1,\nu +1,2\nu +1%
\end{array}%
\right\vert -z^{2}\right)  \notag \\
&&\quad \left. +\frac{z^{2}}{4\left( 1-\nu ^{2}\right) }\,_{3}F_{4}\left(
\left. 
\begin{array}{c}
1,1,\frac{3}{2} \\ 
2,2,2-\nu ,2+\nu%
\end{array}%
\right\vert -z^{2}\right) \right] .  \notag
\end{eqnarray}
\end{theorem}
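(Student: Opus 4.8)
The plan is to combine the two Dunster integral representations with the already-established closed forms. Observe that identity (\ref{D(Jnu*Ynu)_int}) gives the order-derivative of the product $J_\nu Y_\nu$ in terms of the two integrals $\int_0^z J_\nu^2(t)/t\,dt$ and $\int_z^\infty Y_\nu^2(t)/t\,dt - 1/(2\nu)$, while (\ref{DJnu_int_Dunster}) gives $\partial J_\nu/\partial\nu$ in terms of $\int_0^z J_\nu^2(t)/t\,dt$ and the target integral $\int_z^\infty J_\nu Y_\nu\,dt/t$. However, the cleaner route is to start from the closed-form (\ref{D_J_nu_Brychov}) for $\partial J_\nu/\partial\nu$, which we may assume, and from (\ref{DJnu_int_Dunster}), and simply solve for the unknown integral. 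Concretely, from (\ref{DJnu_int_Dunster}),
\begin{equation*}
\int_z^\infty \frac{J_\nu(t)Y_\nu(t)}{t}\,dt
= \frac{1}{\pi\nu J_\nu(z)}\frac{\partial J_\nu(z)}{\partial\nu}
 - \frac{Y_\nu(z)}{J_\nu(z)}\int_0^z \frac{J_\nu^2(t)}{t}\,dt,
\end{equation*}
valid since $\nu\notin\mathbb Z$ guarantees $J_\nu(z)\not\equiv 0$ on $|\arg z|<\pi$ away from its zeros — and in fact both sides are analytic in $z$, so the resulting identity extends by continuation past the zeros of $J_\nu$.

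The next step is to substitute: for the first term use (\ref{D_J_nu_Brychov}), and for the second term use (\ref{Int_Jnu^2b}). In the first term the factor $Y_\nu(z)-\cot\pi\nu\,J_\nu(z)$ appears multiplied by the ${}_2F_3$; in the second term $Y_\nu(z)/J_\nu(z)$ multiplies the same ${}_2F_3$ (up to the constant $(z/2)^{2\nu}/(2\nu\,\Gamma^2(\nu+1))$). When we divide the first term by $J_\nu(z)$ and subtract the second, the $Y_\nu(z)$-proportional pieces must cancel against each other, leaving only a $\cot\pi\nu$ term times the ${}_2F_3$ plus the $J_\nu(z)$-proportional bracket from (\ref{D_J_nu_Brychov}), now divided by $J_\nu(z)$, i.e. with the $J_\nu(z)$ prefactor stripped. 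That bracket is exactly $\tfrac{1}{2\nu}-\psi(\nu+1)+\log(z/2)+\tfrac{z^2}{4(\nu^2-1)}\,{}_3F_4(\cdots)$. Finally one uses the reflection/recurrence identity $\psi(\nu+1)=\psi(\nu)+1/\nu$ to rewrite $\tfrac{1}{2\nu}-\psi(\nu+1)$ as $-\psi(\nu)-\tfrac{1}{2\nu}$, and pulls out the overall sign $-1/(\pi\nu)$, together with flipping $\log(z/2)=-\log(2/z)$ and $z^2/(4(\nu^2-1)) = -z^2/(4(1-\nu^2))$; matching these against the claimed right-hand side of (\ref{Int_Jnu_Ynu}) completes the proof.

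The one genuine obstacle is justifying convergence of $\int_z^\infty J_\nu(t)Y_\nu(t)\,t^{-1}\,dt$ and the legitimacy of the algebraic manipulation at (and near) the real zeros of $J_\nu$. For convergence, the standard large-$t$ asymptotics $J_\nu(t)Y_\nu(t)=O(1/t)$ (more precisely $J_\nu(t)Y_\nu(t)\sim \tfrac{2}{\pi t}\sin t\cos t = \tfrac{1}{\pi t}\sin 2t$ plus lower-order terms) give an integrand of size $O(t^{-2})$ with an oscillatory leading piece, so the improper integral converges; this is precisely the content guaranteeing (\ref{DJnu_int_Dunster}) makes sense, which we may cite from \cite{Dunster}. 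For the zeros, rather than dividing by $J_\nu(z)$ pointwise, I would carry out the computation on the open set where $J_\nu(z)\neq0$ and then invoke the identity theorem: both sides of (\ref{Int_Jnu_Ynu}) are analytic functions of $z$ on the cut plane $|\arg z|<\pi$ (the hypergeometric series are entire in $z^2$, and $\log(z/2)$ is analytic on the cut plane), so equality on a nonempty open subset forces equality everywhere. An essentially equivalent and perhaps more transparent variant is to differentiate (\ref{D_J_nu_Brychov}) and (\ref{Int_Jnu^2b}) directly, but the solve-and-substitute approach above minimizes the bookkeeping. I expect the bulk of the write-up to be the careful tracking of signs and of the $\psi(\nu+1)$ versus $\psi(\nu)$ shift, all of which is routine once the cancellation of the $Y_\nu(z)$ terms is observed.
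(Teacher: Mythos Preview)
Your argument is correct: dividing Dunster's formula (\ref{DJnu_int_Dunster}) by $\pi\nu J_\nu(z)$, substituting Brychkov's closed form (\ref{D_J_nu_Brychov}) and the tabulated integral (\ref{Int_Jnu^2b}), and observing that the $Y_\nu(z)/J_\nu(z)$ terms cancel does yield (\ref{Int_Jnu_Ynu}) after the shift $\psi(\nu+1)=\psi(\nu)+1/\nu$. Your handling of the zeros of $J_\nu$ by analytic continuation is also sound.

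However, your route is essentially the reverse of the paper's. The paper computes the indefinite integral directly: it expands $Y_\nu = (\cos\pi\nu\,J_\nu - J_{-\nu})/\sin\pi\nu$, integrates $J_\nu^2/t$ and $J_\nu J_{-\nu}/t$ term by term from the Cauchy-product series, recasts the results as ${}_2F_3$ and ${}_3F_4$ functions, and then determines the constant at $t\to\infty$ via the asymptotic formula (\ref{pFp+1_infinito}) --- including a delicate $\epsilon\to 0$ regularization to resolve the double pole $a_1=a_2=1$ in the ${}_3F_4$. The paper then substitutes this integral back into (\ref{DJnu_int_Dunster}) to \emph{derive} (\ref{DJnu_closed_form}), equivalent to (\ref{D_J_nu_Brychov}); producing that derivation is the paper's stated purpose, since Brychkov's result was obtained only by symbolic computer algebra. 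Your approach is considerably shorter, but it takes as input precisely the identity the paper sets out to justify, so within the logic of this paper it would be circular. As a standalone proof of the theorem, granted (\ref{D_J_nu_Brychov}) as an external citation, it is valid --- but it bypasses the direct integral computation that constitutes the paper's contribution.
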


\begin{proof}
First, let us calculate the corresponding indefinite integral of (\ref%
{Int_Jnu_Ynu}) applying the definition of the $Y_{\nu }\left( z\right) $
function given in (\ref{Ynu_def}). Thereby, we have 
\begin{equation}
\int \frac{J_{\nu }\left( t\right) Y_{\nu }\left( t\right) }{t}dt=\cot \pi
\nu \int \frac{J_{\nu }^{2}\left( t\right) }{t}dt-\csc \pi \nu \int \frac{%
J_{-\nu }\left( t\right) J_{\nu }\left( t\right) }{t}dt.
\label{Int_Jnu_Ynu_indefinida_1}
\end{equation}%
Notice that the first integral on the RHS\ of (\ref{Int_Jnu_Ynu_indefinida_1}%
)\ has been calculated in (\ref{Int_Jnu^2b}). However, the general
expression given in (\ref{Int_Prudnikov})\ fails for the second integral.
Nonetheless, taking $\mu =-\nu $ in the following expression \cite[Eqn.
10.8.3]{DLMF}%
\begin{equation*}
J_{\nu }\left( z\right) J_{\mu }\left( z\right) =\left( \frac{z}{2}\right)
^{\mu +\nu }\sum_{n=0}^{\infty }\frac{\left( \nu +\mu +n+1\right) _{n}\left(
-1\right) ^{n}\left( z/2\right) ^{2n}}{n!\, \Gamma \left( n+\mu +1\right)
\Gamma \left( n+\nu +1\right) },
\end{equation*}%
and separating the first term, we can integrate term by term, arriving at 
\begin{eqnarray}
&&\int \frac{J_{-\nu }\left( t\right) J_{\nu }\left( t\right) }{t}dt
\label{Int_Jnu_J-nu} \\
&=&\frac{\log t}{\Gamma \left( 1+\nu \right) \Gamma \left( 1-\nu \right) }+%
\frac{1}{2}\sum_{k=1}^{\infty }\frac{\Gamma \left( 2k+1\right) \left(
-1\right) ^{k}\left( t/2\right) ^{2k+1}}{k!\ k\, \Gamma \left( k+1\right)
\Gamma \left( k+\nu +1\right) \Gamma \left( k-\nu +1\right) },  \notag
\end{eqnarray}%
where we have used the definition of the Pochhammer symbol (\ref%
{Pochhamer_def}). Using now the following properties of the gamma function 
\cite[Eqn. 1.2.1\&2]{Lebedev}:%
\begin{equation}
\Gamma \left( z+1\right) =z\, \Gamma \left( z\right) ,  \label{Gamma_factorial}
\end{equation}%
and 
\begin{equation}
\Gamma \left( z\right) \Gamma \left( 1-z\right) =\frac{\pi }{\sin \pi z}%
,\quad z\notin 
%TCIMACRO{\U{2124} }%
%BeginExpansion
\mathbb{Z}
%EndExpansion
,  \label{Gamma_reflection}
\end{equation}%
and expressing the sum given in (\ref{Int_Jnu_J-nu})\ as a hypergeometric
function, after some simplification, we arrive at%
\begin{eqnarray}
&&\int \frac{J_{-\nu }\left( t\right) J_{\nu }\left( t\right) }{t}dt
\label{Int_Jnu_J-nu_resultado} \\
&=&\frac{\sin \pi \nu }{\pi \nu }\left\{ \log t-\frac{t^{2}}{4\left( 1-\nu
^{2}\right) }\,_{3}F_{4}\left( \left. 
\begin{array}{c}
1,1,\frac{3}{2} \\ 
2,2,2-\nu ,2+\nu%
\end{array}%
\right\vert -t^{2}\right) \right\} .  \notag
\end{eqnarray}%
Inserting now the results (\ref{Int_Jnu^2b})\ and (\ref%
{Int_Jnu_J-nu_resultado})\ in (\ref{Int_Jnu_Ynu_indefinida_1}), we obtain%
\begin{eqnarray}
&&\int \frac{J_{\nu }\left( t\right) Y_{\nu }\left( t\right) }{t}dt
\label{Int_Jnu_Ynu_indefinida} \\
&=&\frac{1}{\pi \nu }\left[ -\log t+\left( \frac{t}{2}\right) ^{2\nu }\frac{%
\pi \cot \pi \nu }{2\, \Gamma ^{2}\left( \nu +1\right) }\,_{2}F_{3}\left(
\left. 
\begin{array}{c}
\nu ,\nu +\frac{1}{2} \\ 
\nu +1,\nu +1,2\nu +1%
\end{array}%
\right\vert -t^{2}\right) \right.  \notag \\
&&\qquad +\left. \frac{t^{2}}{4\left( 1-\nu ^{2}\right) }\,_{3}F_{4}\left(
\left. 
\begin{array}{c}
1,1,\frac{3}{2} \\ 
2,2,2-\nu ,2+\nu%
\end{array}%
\right\vert -t^{2}\right) \right] .  \notag
\end{eqnarray}%
To calculate (\ref{Int_Jnu_Ynu_indefinida})\ with the integration
limits given in (\ref{Int_Jnu_Ynu}), we have to perform the following
limits:%
\begin{equation}
\lim_{t\rightarrow \infty }\, \frac{\cot \pi \nu }{2\nu\, \Gamma ^{2}\left( \nu
+1\right) }\,\left( \frac{t}{2}\right) ^{2\nu }\,_{2}F_{3}\left( \left. 
\begin{array}{c}
\nu ,\nu +\frac{1}{2} \\ 
\nu +1,\nu +1,2\nu +1%
\end{array}%
\right\vert -t^{2}\right) ,  \label{lim_2F3}
\end{equation}%
and%
\begin{equation}
\lim_{t\rightarrow \infty }\, \frac{t^{2}}{4\pi \nu \left( 1-\nu ^{2}\right) }%
\,_{3}F_{4}\left( \left. 
\begin{array}{c}
1,1,\frac{3}{2} \\ 
2,2,2-\nu ,2+\nu%
\end{array}%
\right\vert -t^{2}\right) .  \label{lim_3F4}
\end{equation}%
For this purpose, let us apply the following asymptotic formula for the $%
_{p}F_{p+1}$\ hypergeometric function as $\left\vert z\right\vert
\rightarrow \infty $ (see \cite[Sect. 16.11]{DLMF}): 
\begin{eqnarray}
&&_{p}F_{p+1}\left( \left. 
\begin{array}{c}
a_{1},\ldots ,a_{p} \\ 
b_{1},\ldots b_{p+1}%
\end{array}%
\right\vert z\right)  \label{pFp+1_infinito} \\
&=&\frac{\prod_{j=1}^{p+1}\Gamma \left( b_{j}\right) }{\sqrt{\pi }%
\prod_{k=1}^{p}\Gamma \left( a_{k}\right) }\left( -z\right) ^{\chi }\left\{
\cos \left( \pi \chi +2\sqrt{-z}\right) \left[ 1+O\left( \frac{1}{z}\right) %
\right] \right.  \notag \\
&&+\left. \frac{c_{1}}{2\sqrt{-z}}\sin \left( \pi \chi +2\sqrt{-z}\right) %
\left[ 1+O\left( \frac{1}{z}\right) \right] \right\}  \notag \\
&&+\frac{\prod_{j=1}^{p+1}\Gamma \left( b_{j}\right) }{\prod_{k=1}^{p}\Gamma
\left( a_{k}\right) }\sum_{k=1}^{p}\frac{\Gamma \left( a_{k}\right)
\prod_{j=1,j\neq k}^{p}\Gamma \left( a_{j}-a_{k}\right) }{%
\prod_{j=1}^{p+1}\Gamma \left( b_{j}-a_{k}\right) }\left( -z\right) ^{-a_{k}}%
\left[ 1+O\left( \frac{1}{z}\right) \right] ,  \notag
\end{eqnarray}%
wherein the case of simple poles (i.e. $a_{j}-a_{k}\notin 
%TCIMACRO{\U{2124} }%
%BeginExpansion
\mathbb{Z}
%EndExpansion
$) and the following definitions are considered: 
\begin{eqnarray*}
A_{p} &=&\sum_{k=1}^{p}a_{k},\qquad B_{p+1}=\sum_{k=1}^{p+1}b_{k}, \\
\chi &=&\frac{1}{2}\left( A_{p}-B_{p+1}+\frac{1}{2}\right) , \\
\mathbf{A} &=&\sum_{s=2}^{p}\sum_{j=1}^{s-1}a_{s}a_{j},\qquad \mathbf{B}%
=\sum_{s=2}^{p+1}\sum_{j=1}^{s-1}b_{s}b_{j}, \\
c_{1} &=&2\left( \mathbf{B}-\mathbf{A}+\frac{1}{4}\left(
3A_{p}+B_{p+1}-2\right) \left( A_{p}-B_{p+1}\right) -\frac{3}{16}\right) .
\end{eqnarray*}%
Therefore, after some long but simple calculations using the
properties (\ref{Gamma_factorial}), (\ref%
{Gamma_reflection}) and \cite[Eqn. 1.2.3]{Lebedev} 
\begin{equation}
2^{2z-1}\Gamma \left( z\right) \Gamma \left( z+\frac{1}{2}\right) =\sqrt{\pi 
}\, \Gamma \left( 2z\right) ,  \label{Gamma_duplication}
\end{equation}%
of the gamma function, the asymptotic expansion of (\ref{lim_2F3})\ reads as%
\begin{eqnarray}
&&\frac{\cot \pi \nu }{2\nu\, \Gamma ^{2}\left( \nu +1\right) }\left( \frac{t}{%
2}\right) ^{2\nu }\,_{2}F_{3}\left( \left. 
\begin{array}{c}
\nu ,\nu +\frac{1}{2} \\ 
\nu +1,\nu +1,2\nu +1%
\end{array}%
\right\vert -t^{2}\right)  \label{lim_2F3_resultado} \\
&=&\frac{\cot \pi \nu }{2\nu }-\frac{\cot \pi \nu }{\pi t}+O\left( \frac{1}{%
t^{2}}\right) ,\qquad t\rightarrow \infty .  \notag
\end{eqnarray}%
Notice that to calculate the limit given in (\ref{lim_3F4}), we cannot
apply directly (\ref{pFp+1_infinito}) since we have a double pole ($%
a_{1}=a_{2}=1$). Nevertheless, we can still using (\ref{pFp+1_infinito}),
calculating the following asymptotic expansion: 
\begin{eqnarray*}
&&\frac{t^{2}}{4\pi \nu \left( 1-\nu ^{2}\right) }\,_{3}F_{4}\left( \left. 
\begin{array}{c}
1,1+\epsilon ,\frac{3}{2} \\ 
2,2,2-\nu ,2+\nu%
\end{array}%
\right\vert -t^{2}\right) \\
&=&-\frac{\Gamma \left( \epsilon -\frac{1}{2}\right) \cot \pi \nu }{2\pi
^{3/2}t\,\Gamma \left( 1+\epsilon \right) }+\frac{t^{-2+\epsilon }\cos
\left( 2t+\frac{\pi \epsilon }{2}\right) \csc \pi \nu }{2\pi \Gamma \left(
1+\epsilon \right) } \\
&&+\frac{1}{2\pi \nu \epsilon }+\frac{t^{-2\epsilon }\Gamma \left( \frac{1}{2%
}-\epsilon \right) \csc \pi \nu }{2\sqrt{\pi }\epsilon ^{2}\Gamma \left(
-\epsilon \right) \Gamma \left( 1-\nu -\epsilon \right) \Gamma \left( 1+\nu
-\epsilon \right) }+O\left( \frac{1}{t^{3}}\right) ,
\end{eqnarray*}%
and then calculating the limit $\epsilon \rightarrow 0$. For this purpose,
consider the following first order Taylor-series expansions as $\epsilon
\rightarrow 0$, 
\begin{eqnarray}
\Gamma \left( a-\epsilon \right) &\approx &\Gamma \left( a\right) \left[
1-\psi \left( a\right) \epsilon \right] ,  \label{Gamma_Taylor} \\
\frac{1}{\Gamma \left( a-\epsilon \right) } &\approx &\frac{1}{\Gamma \left(
a\right) }\left[ 1+\psi \left( a\right) \epsilon \right] ,
\label{Gamma_Taylor_2} \\
a^{\epsilon } &\approx &1+\log \left( a\right) \epsilon ,  \label{a^x_Taylor}
\end{eqnarray}%
where $\psi \left( z\right) =\Gamma ^{\prime }\left( z\right) /\Gamma \left(
z\right) $ denotes the digamma function \cite[Eqn. 5.2.2]{DLMF}. Also,
consider the following approximation (see \cite[Eqn. 5.7.1]{DLMF}),%
\begin{equation}
\Gamma \left( \epsilon \right) \approx \frac{1}{\epsilon }-\gamma ,\quad
\epsilon \rightarrow 0,  \label{Gamma_z->0}
\end{equation}%
where $\gamma =0.57721566\ldots $ denotes Euler's constant. Therefore,
taking into account (\ref{Gamma_Taylor})-(\ref{Gamma_z->0}), we have%
\begin{eqnarray}
&&\lim_{\epsilon \rightarrow 0}\, \frac{t^{2}}{4\pi \nu \left( 1-\nu
^{2}\right) }\,_{3}F_{4}\left( \left. 
\begin{array}{c}
1,1+\epsilon ,\frac{3}{2} \\ 
2,2,2-\nu ,2+\nu%
\end{array}%
\right\vert -t^{2}\right)  \label{lim_3F4_resultado} \\
&\sim &\frac{1}{2\pi \nu }\left[ \log \left( \frac{t^{2}}{4}\right) -\psi
\left( 1+\nu \right) -\psi \left( 1-\nu \right) \right] ,\quad t\rightarrow
\infty ,  \notag
\end{eqnarray}%
where we have considered that \cite[Eqn. 1.3.8]{Lebedev} 
\begin{equation*}
\psi \left( \frac{1}{2}\right) =-\gamma -2\log 2.
\end{equation*}%
Taking into account (\ref{lim_2F3_resultado})\ and (\ref%
{lim_3F4_resultado}), and applying the following properties of the digamma
function \cite[Eqns. 1.3.3\&4]{Lebedev} 
\begin{eqnarray}
\psi \left( z+1\right) &=&\frac{1}{z}+\psi \left( z\right) ,
\label{Digamma_1} \\
\psi \left( 1-z\right) -\psi \left( z\right) &=&\pi \cot \pi z,
\label{Digamma_2}
\end{eqnarray}%
we arrive at%
\begin{eqnarray}
&&\lim_{t\rightarrow \infty }\, \frac{1}{\pi \nu }\left[ \frac{\pi \cot \pi \nu
\ \left( t/2\right) ^{2\nu }}{2\, \Gamma ^{2}\left( \nu +1\right) }%
\,_{2}F_{3}\left( \left. 
\begin{array}{c}
\nu ,\nu +\frac{1}{2} \\ 
\nu +1,\nu +1,2\nu +1%
\end{array}%
\right\vert -t^{2}\right) \right.  \label{lim_Int_t->inf} \\
&&-\left. \log t+\frac{t^{2}}{4\left( 1-\nu ^{2}\right) }\,_{3}F_{4}\left(
\left. 
\begin{array}{c}
1,1,\frac{3}{2} \\ 
2,2,2-\nu ,2+\nu%
\end{array}%
\right\vert -t^{2}\right) \right]  \notag \\
&=&-\frac{1}{\pi \nu }\left[ \frac{1}{2\nu }+\psi \left( \nu \right) +\log 2%
\right] .  \notag
\end{eqnarray}%
Finally, according to (\ref{Int_Jnu_Ynu_indefinida}) and (\ref%
{lim_Int_t->inf}), we conclude (\ref{Int_Jnu_Ynu}).
\end{proof}

Next, we will calculate the integrals given in the integral representation of 
$\partial Y_{\nu }/\partial \nu $ given in (\ref{DYnu_int_Dunster}).

\begin{theorem}
If $z\neq 0$, $\left\vert \mathrm{arg}\ z\right\vert <\pi $ and $\nu >0$, $%
\nu \notin 
%TCIMACRO{\U{2124} }%
%BeginExpansion
\mathbb{Z}
%EndExpansion
$, the following integral holds true:%
\begin{eqnarray}
&&\int_{z}^{\infty }\frac{Y_{\nu }^{2}\left( t\right) }{t}dt  \label{Int_Y2}
\\
&=&\frac{1}{2\pi ^{2}\nu }\left[ \left( \frac{z}{2}\right) ^{-2\nu }\Gamma
^{2}\left( \nu \right) \,_{2}F_{3}\left( \left. 
\begin{array}{c}
-\nu ,\frac{1}{2}-\nu \\ 
1-\nu ,1-\nu ,1-2\nu%
\end{array}%
\right\vert -z^{2}\right) \right.  \notag \\
&&-\left. \left( \frac{z}{2}\right) ^{2\nu }\Gamma ^{2}\left( -\nu \right)
\cos ^{2}\pi \nu \,_{2}F_{3}\left( \left. 
\begin{array}{c}
\nu ,\nu +\frac{1}{2} \\ 
\nu +1,\nu +1,2\nu +1%
\end{array}%
\right\vert -z^{2}\right) \right]  \notag \\
&&-\frac{1+2\cot ^{2}\pi \nu }{2\nu }-\frac{2\cot \pi \nu }{\pi \nu }\left[ 
\frac{z^{2}}{4\left( 1-\nu ^{2}\right) }\,_{3}F_{4}\left( \left. 
\begin{array}{c}
1,1,\frac{3}{2} \\ 
2,2,2-\nu ,2+\nu%
\end{array}%
\right\vert -z^{2}\right) \right.  \notag \\
&&\quad \left. 
\begin{array}{c}
%TCIMACRO{\TeXButton{TeX field}{\displaystyle} }%
%BeginExpansion
\displaystyle
%EndExpansion
\\ 
%TCIMACRO{\TeXButton{TeX field}{\displaystyle}}%
%BeginExpansion
\displaystyle%
%EndExpansion
\end{array}%
+\log \left( \frac{2}{z}\right) +\frac{1}{2\nu }+\psi \left( \nu \right) %
\right] .  \notag
\end{eqnarray}
\end{theorem}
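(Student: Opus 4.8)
The plan is to mimic the proof of the previous theorem and reduce everything to integrals already in hand. Writing $Y_\nu$ via (\ref{Ynu_def}), one has $Y_\nu^2=\csc^2\pi\nu\bigl[\cos^2\pi\nu\,J_\nu^2-2\cos\pi\nu\,J_\nu J_{-\nu}+J_{-\nu}^2\bigr]$, hence
\[
\int\frac{Y_\nu^2(t)}{t}\,dt=\csc^2\pi\nu\left[\cos^2\pi\nu\int\frac{J_\nu^2(t)}{t}\,dt-2\cos\pi\nu\int\frac{J_\nu(t)J_{-\nu}(t)}{t}\,dt+\int\frac{J_{-\nu}^2(t)}{t}\,dt\right].
\]
The first indefinite integral is given by (\ref{Int_Jnu^2b}) and the second by (\ref{Int_Jnu_J-nu_resultado}); only $\int t^{-1}J_{-\nu}^2(t)\,dt$ remains to be computed.

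For that integral I would repeat the device used for (\ref{Int_Jnu_J-nu}): substitute $\mu=\nu=-\nu$ into \cite[Eqn.~10.8.3]{DLMF} and integrate the resulting power series term by term. Because $\nu\notin\mathbb{Z}$, the exponent $2n-2\nu$ never vanishes, so \emph{no} logarithmic term appears; recognising the series through (\ref{c_k+1/c_k})--(\ref{p_F_q_c_k}) one obtains the $\nu\mapsto-\nu$ mirror of (\ref{Int_Jnu^2b}),
\[
\int\frac{J_{-\nu}^2(t)}{t}\,dt=\frac{(t/2)^{-2\nu}}{-2\nu\,\Gamma^2(1-\nu)}\,{}_2F_3\!\left(\left.\begin{array}{c}-\nu,\ \frac{1}{2}-\nu\\[2pt]1-\nu,\ 1-\nu,\ 1-2\nu\end{array}\right|-t^2\right).
\]
(Formally this is (\ref{Int_Prudnikov}) with $\lambda=-1$ and orders $\mu=\nu=-\nu$; although the convergence hypothesis of (\ref{Int_Prudnikov}) fails there, the term-by-term integration of the indefinite integral is still legitimate.)

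Adding the three contributions gives $\int t^{-1}Y_\nu^2(t)\,dt$ in closed form; it then remains to evaluate it between $t=z$ and $t\to\infty$. The value at $t=z$ supplies the two ${}_2F_3$ terms, once the reflection formula (\ref{Gamma_reflection}) is used to rewrite $\csc^2\pi\nu/\Gamma^2(1-\nu)=\Gamma^2(\nu)/\pi^2$ and $\csc^2\pi\nu/\Gamma^2(\nu+1)=\Gamma^2(-\nu)/\pi^2$, together with the ${}_3F_4$ term and a $-\frac{2\cot\pi\nu}{\pi\nu}\log z$ coming from the $J_\nu J_{-\nu}$ piece. For $t\to\infty$ I would handle the three pieces separately: $\int t^{-1}J_\nu^2$ tends to $\frac{1}{2\nu}$ exactly as in (\ref{lim_2F3_resultado}); the $J_\nu J_{-\nu}$ piece is controlled by the $\epsilon$-regularised expansion already recorded in (\ref{lim_3F4_resultado})--(\ref{lim_Int_t->inf}); and for the new $\int t^{-1}J_{-\nu}^2$ piece I would feed its ${}_2F_3$ into the asymptotic formula (\ref{pFp+1_infinito}), noting that here $\chi=\nu-1$, so the oscillatory block and the $(-z)^{-a_2}$ term ($a_2=\frac{1}{2}-\nu$) are killed by the prefactor $(t/2)^{-2\nu}$, while the $(-z)^{-a_1}$ term ($a_1=-\nu$) survives as a constant which the duplication formula (\ref{Gamma_duplication}) collapses to $-\frac{1}{2\nu}$.

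The main obstacle is this asymptotic bookkeeping — deciding which terms of (\ref{pFp+1_infinito}) survive multiplication by the algebraic prefactors and correctly re-using the delicate double-pole limit for the ${}_3F_4$. The final step is then a long but routine simplification: the digamma recurrences (\ref{Digamma_1})--(\ref{Digamma_2}) turn $\frac{1}{2}[\psi(1+\nu)+\psi(1-\nu)]$ into $\psi(\nu)+\frac{1}{2\nu}+\frac{\pi}{2}\cot\pi\nu$, and the trigonometric identity $1+\cos^2\pi\nu=(1+2\cot^2\pi\nu)\sin^2\pi\nu$ collapses the accumulated constants into $-\frac{1+2\cot^2\pi\nu}{2\nu}$ and the bracketed $-\frac{2\cot\pi\nu}{\pi\nu}[\,\cdots]$ displayed in (\ref{Int_Y2}), which would complete the proof.
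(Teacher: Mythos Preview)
Your proposal is correct and follows essentially the same route as the paper: the same decomposition of $Y_\nu^2$ via (\ref{Ynu_def}) into the three pieces $J_\nu^2$, $J_{-\nu}^2$, $J_\nu J_{-\nu}$, the observation that the $J_{-\nu}^2$ primitive is simply (\ref{Int_Jnu^2b}) with $\nu\mapsto-\nu$, and the evaluation at $t\to\infty$ via the asymptotic expansions (\ref{pFp+1_infinito}) and the $\epsilon$-regularised limit already established for the ${}_3F_4$. The paper records the two ${}_2F_3$ asymptotics jointly as (\ref{Asint_Y2_1}) and then applies (\ref{Digamma_1})--(\ref{Digamma_2}) and (\ref{Gamma_reflection}) exactly as you outline.
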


\begin{proof}
First, let us calculate the indefinite integral of (\ref{Int_Y2}). By using
the definition given in (\ref{Ynu_def}) of the Bessel function of the second kind $%
Y_{\nu }\left( z\right) $, we have that 
\begin{eqnarray}
&&\int \frac{Y_{\nu }^{2}\left( t\right) }{t}dt  \label{Int_Y2_indefinida} \\
&=&\cot ^{2}\pi \nu \int \frac{J_{\nu }^{2}\left( t\right) }{t}dt+\csc
^{2}\pi \nu \int \frac{J_{-\nu }^{2}\left( t\right) }{t}dt  \notag \\
&&-2\frac{\cos \pi \nu }{\sin ^{2}\pi \nu }\int \frac{J_{\nu }\left(
t\right) J_{-\nu }\left( t\right) }{t}dt.  \notag
\end{eqnarray}%
Notice that we have already calculated the first integral given on the RHS\ of (\ref{Int_Y2_indefinida}%
)\ in (\ref{Int_Jnu^2b}), thus the second integral on
the RHS of (\ref{Int_Y2_indefinida})\ is precisely (\ref{Int_Jnu^2b})\ performing the change $%
\nu \rightarrow -\nu $. Also, we have already calculated the third integral on the RHS\ of (\ref%
{Int_Y2_indefinida})\ in (\ref{Int_Jnu_J-nu_resultado}).
Collecting all these results, we have%
\begin{eqnarray}
&&\int \frac{Y_{\nu }^{2}\left( t\right) }{t}dt
\label{Int_Y2_indefinida_resultado} \\
&=&\frac{\cot ^{2}\pi \nu \ \left( t/2\right) ^{2\nu }}{2\nu\, \Gamma
^{2}\left( \nu +1\right) }\,_{2}F_{3}\left( \left. 
\begin{array}{c}
\nu ,\nu +\frac{1}{2} \\ 
\nu +1,\nu +1,2\nu +1%
\end{array}%
\right\vert -t^{2}\right)   \notag \\
&&-\frac{\cot ^{2}\pi \nu \ \left( t/2\right) ^{-2\nu }}{2\nu\, \Gamma
^{2}\left( 1-\nu \right) }\,_{2}F_{3}\left( \left. 
\begin{array}{c}
-\nu ,\frac{1}{2}-\nu  \\ 
1-\nu ,1-\nu ,1-2\nu 
\end{array}%
\right\vert -t^{2}\right)   \notag \\
&&-\frac{2\cot \pi \nu }{\pi \nu }\left[ \log t-\frac{t^{2}}{4\nu \left(
1-\nu ^{2}\right) }\,_{3}F_{4}\left( \left. 
\begin{array}{c}
1,1,\frac{3}{2} \\ 
2,2,2-\nu ,2+\nu 
\end{array}%
\right\vert -t^{2}\right) \right] .  \notag
\end{eqnarray}%
To calculate (\ref{Int_Y2_indefinida}) with the integration limits
given in (\ref{Int_Y2}), we have to consider the asymptotic expansion (\ref%
{lim_2F3_resultado}), replacing $\nu \rightarrow \pm \nu $%
\begin{eqnarray}
&&\frac{\pm \left( t/2\right) ^{\pm 2\nu }}{2\nu\, \Gamma ^{2}\left( 1\pm \nu
\right) }\,_{2}F_{3}\left( \left. 
\begin{array}{c}
\pm \nu ,\frac{1}{2}\pm \nu  \\ 
1\pm \nu ,1\pm \nu ,1\pm 2\nu 
\end{array}%
\right\vert -t^{2}\right)   \label{Asint_Y2_1} \\
&=&\frac{\pm 1}{2\nu }-\frac{1}{\pi t}+O\left( \frac{1}{t^{2}}\right)
,\qquad t\rightarrow \infty .  \notag
\end{eqnarray}%
Also, considering the asymptotic expansion (\ref{lim_3F4_resultado}) and taking
into account the properties of the digamma function given in (\ref{Digamma_1}) and (%
\ref{Digamma_2}), we have%
\begin{eqnarray}
&&\frac{t^{2}}{4\pi \nu \left( 1-\nu ^{2}\right) }\,_{3}F_{4}\left( \left. 
\begin{array}{c}
1,1,\frac{3}{2} \\ 
2,2,2-\nu ,2+\nu 
\end{array}%
\right\vert -t^{2}\right)   \label{Asint_Y2_2} \\
&\sim &\frac{1}{2\pi \nu }\left[ \log \left( \frac{t^{2}}{4}\right) -\frac{1%
}{\nu }-2\psi \left( \nu \right) -\pi \cot \pi \nu \right] ,\quad
t\rightarrow \infty .  \notag
\end{eqnarray}%
Therefore, taking into account the indefinite integral (\ref%
{Int_Y2_indefinida_resultado}) and the asymptotic expansions (\ref%
{Asint_Y2_1}) and (\ref{Asint_Y2_2}), after some simple calculations wherein
we apply the reflection formula of the gamma function (\ref%
{Gamma_reflection}), we arrive at (\ref{Int_Y2}).
\end{proof}

Finally, according to the integral representation given in (\ref%
{DJnu_int_Dunster}), and the integrals calculated in (\ref{Int_Jnu^2b}) and (%
\ref{Int_Jnu_Ynu}), we express in closed-form the order derivative of
the Bessel function as, 
\begin{eqnarray}
&&\frac{\partial J_{\nu }\left( z\right) }{\partial \nu }
\label{DJnu_closed_form} \\
&=&\frac{-\pi J_{-\nu }\left( z\right) \csc \pi \nu }{2\Gamma ^{2}\left( \nu
+1\right) }\left( \frac{z}{2}\right) ^{2\nu }\,_{2}F_{3}\left( \left. 
\begin{array}{c}
\nu ,\nu +\frac{1}{2} \\ 
\nu +1,\nu +1,2\nu +1%
\end{array}%
\right\vert -z^{2}\right)  \notag \\
&&-J_{\nu }\left( z\right) \left[ \frac{z^{2}}{4\left( 1-\nu ^{2}\right) }%
\,_{3}F_{4}\left( \left. 
\begin{array}{c}
1,1,\frac{3}{2} \\ 
2,2,2-\nu ,2+\nu%
\end{array}%
\right\vert -z^{2}\right) \right.  \notag \\
&&\quad \left. 
\begin{array}{c}
%TCIMACRO{\TeXButton{TeX field}{\displaystyle} }%
%BeginExpansion
\displaystyle
%EndExpansion
\\ 
%TCIMACRO{\TeXButton{TeX field}{\displaystyle}}%
%BeginExpansion
\displaystyle%
%EndExpansion
\end{array}%
+\log \left( \frac{2}{z}\right) +\frac{1}{2\nu }+\psi \left( \nu \right) %
\right] ,  \notag
\end{eqnarray}%
where we have taken into account the definition of $Y_{\nu }\left( z\right) $ given in
(\ref{Ynu_def}). Note that (\ref{DJnu_closed_form})\ is equivalent to the
result obtained by Brychov in (\ref{D_J_nu_Brychov}).

Similarly, substituting (\ref{Int_Y2}) and (\ref{Int_Jnu_Ynu}) in (\ref%
{DYnu_int_Dunster}), after some simplification, we arrive at,%
\begin{eqnarray}
&&\frac{\partial Y_{\nu }\left( z\right) }{\partial \nu }
\label{DYnu_closed_form} \\
&=&J_{\nu }\left( z\right) \left[ \frac{\Gamma ^{2}\left( \nu \right) }{2\pi 
}\left( \frac{z}{2}\right) ^{-2\nu }\,_{2}F_{3}\left( \left. 
\begin{array}{c}
-\nu ,\frac{1}{2}-\nu \\ 
1-\nu ,1-\nu ,1-2\nu%
\end{array}%
\right\vert -z^{2}\right) -\pi \csc ^{2}\pi \nu \right]  \notag \\
&&-\frac{\cos \pi \nu }{2\pi }\Gamma ^{2}\left( -\nu \right) J_{-\nu }\left(
z\right) \left( \frac{z}{2}\right) ^{2\nu }\,_{2}F_{3}\left( \left. 
\begin{array}{c}
\nu ,\nu +\frac{1}{2} \\ 
\nu +1,\nu +1,2\nu +1%
\end{array}%
\right\vert -z^{2}\right)  \notag \\
&&+\left[ \log \left( \frac{2}{z}\right) +\frac{1}{2\nu }+\psi \left( \nu
\right) +\frac{z^{2}}{4\left( 1-\nu ^{2}\right) }\,_{3}F_{4}\left( \left. 
\begin{array}{c}
1,1,\frac{3}{2} \\ 
2,2,2-\nu ,2+\nu%
\end{array}%
\right\vert -z^{2}\right) \right]  \notag \\
&&\times \left( Y_{\nu }\left( z\right) -2\cot \pi \nu \ J_{\nu }\left(
z\right) \right) ,  \notag
\end{eqnarray}%
which is equivalent to the result given in \cite{BrychovNew}.

Finally, according to (\ref{Int_Jnu^2b})\ and (\ref{Int_Y2}), we rewrite (%
\ref{D(Jnu*Ynu)_int}) in closed-form\ as,%
\begin{eqnarray}
&&\frac{\partial }{\partial \nu }\left( J_{\nu }\left( z\right) Y_{\nu
}\left( z\right) \right)  \label{D(Jnu*Ynu)_closed_form} \\
&=&\frac{J_{-\nu }\left( z\right) }{2\pi }\left( \frac{z}{2}\right) ^{2\nu
}\Gamma ^{2}\left( -\nu \right)  \notag \\
&&\times \left[ J_{-\nu }\left( z\right) -2\cos \pi \nu \,J_{\nu }\left(
z\right) \right] \,_{2}F_{3}\left( \left. 
\begin{array}{c}
\nu ,\nu +\frac{1}{2} \\ 
\nu +1,\nu +1,2\nu +1%
\end{array}%
\right\vert -z^{2}\right)  \notag \\
&&+J_{\nu }^{2}\left( z\right) \left\{ \frac{\left( z/2\right) ^{-2\nu }}{%
2\pi }\Gamma ^{2}\left( \nu \right) \,_{2}F_{3}\left( \left. 
\begin{array}{c}
-\nu ,+\frac{1}{2}-\nu \\ 
1-\nu ,1-\nu ,1-2\nu%
\end{array}%
\right\vert -z^{2}\right) \right.  \notag \\
&&-\pi \csc ^{2}\pi \nu -2\cot \pi \nu  \notag \\
&&\times \left. \left[ \frac{z^{2}}{4\left( 1-\nu ^{2}\right) }%
\,_{3}F_{4}\left( \left. 
\begin{array}{c}
1,1,\frac{3}{2} \\ 
2,2,2-\nu ,2+\nu%
\end{array}%
\right\vert -z^{2}\right) +\log \left( \frac{2}{z}\right) +\frac{1}{2\nu }%
+\psi \left( \nu \right) \right] \right\} .  \notag
\end{eqnarray}

\section{Order derivatives for modified Bessel functions\label{Section:
Modified Bessel functions}}

Similar integrals as in the previous Section can be calculated replacing
Bessel functions by modified Bessel functions. Here we collect the results
with a sketch of the proof.

\begin{theorem}
If $\nu >0$, the following integral holds true:%
\begin{equation}
\int_{0}^{z}\frac{I_{\nu }^{2}\left( t\right) }{t}dt=\frac{\left( z/2\right)
^{2\nu }}{2\nu\, \Gamma ^{2}\left( \nu +1\right) }\,_{2}F_{3}\left( \left. 
\begin{array}{c}
\nu ,\nu +\frac{1}{2} \\ 
\nu +1,\nu +1,2\nu +1%
\end{array}%
\right\vert z^{2}\right) .  \label{Int_I2}
\end{equation}
\end{theorem}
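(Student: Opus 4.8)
The plan is to reduce (\ref{Int_I2}) to the already-established Bessel identity (\ref{Int_Jnu^2b}) by analytic continuation, and then, for completeness, to indicate the direct route that parallels the proof of the first Theorem of Section~\ref{Section: Bessel functions}. Starting from the connection formula $I_{\nu}(t)=e^{-i\pi\nu/2}J_{\nu}(it)$, squaring gives $I_{\nu}^{2}(t)=e^{-i\pi\nu}J_{\nu}^{2}(it)$, so that after the substitution $u=it$ (under which $dt/t=du/u$),
\[
\int_{0}^{z}\frac{I_{\nu}^{2}(t)}{t}\,dt=e^{-i\pi\nu}\int_{0}^{iz}\frac{J_{\nu}^{2}(u)}{u}\,du .
\]
I would then invoke (\ref{Int_Jnu^2b}) with $z$ replaced by $iz$; since $i^{2\nu}=e^{i\pi\nu}$ and $-(iz)^{2}=z^{2}$, the factor $e^{-i\pi\nu}$ cancels against $i^{2\nu}$, the argument $-z^{2}$ of the ${}_{2}F_{3}$ turns into $+z^{2}$, and (\ref{Int_I2}) follows.

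Alternatively, one may argue without any reference to branches, exactly as in the Bessel case. The Cauchy product of (\ref{Inu_def}) with itself gives the modified-Bessel analogue of \cite[Eqn. 10.8.3]{DLMF},
\[
I_{\nu}^{2}(t)=\left(\frac{t}{2}\right)^{2\nu}\sum_{n=0}^{\infty}\frac{\left(2\nu+n+1\right)_{n}}{n!\,\Gamma^{2}\left(n+\nu+1\right)}\left(\frac{t}{2}\right)^{2n},
\]
the only difference from the expansion of $J_{\nu}^{2}$ being the absence of the alternating sign. Dividing by $t$ and integrating term by term on $(0,z)$---legitimate because the series converges uniformly on compact sets and the integrand is $O\!\left(t^{2\nu-1}\right)$ at the origin, which is where the hypothesis $\nu>0$ is used---produces
\[
\int_{0}^{z}\frac{I_{\nu}^{2}(t)}{t}\,dt=\sum_{n=0}^{\infty}\frac{\left(2\nu+n+1\right)_{n}}{n!\,\Gamma^{2}\left(n+\nu+1\right)}\,\frac{\left(z/2\right)^{2\nu+2n}}{2\left(\nu+n\right)} .
\]
Isolating the $n=0$ term yields the prefactor $\left(z/2\right)^{2\nu}/\!\left(2\nu\,\Gamma^{2}(\nu+1)\right)$ announced in (\ref{Int_I2}); writing $\left(2\nu+n+1\right)_{n}=\Gamma(2\nu+2n+1)/\Gamma(2\nu+n+1)$ by (\ref{Pochhamer_def}) and using the duplication relation $(2\nu+2n+1)(2\nu+2n+2)=4\left(n+\nu+\tfrac12\right)(n+\nu+1)$, one checks that the ratio of consecutive terms of the remaining series equals $\dfrac{(n+\nu)\left(n+\nu+\frac12\right)}{(n+\nu+1)^{2}\,(n+2\nu+1)\,(n+1)}\,z^{2}$. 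By the hypergeometric characterization (\ref{c_k+1/c_k})--(\ref{p_F_q_c_k}), this is precisely the ${}_{2}F_{3}$ appearing on the right-hand side of (\ref{Int_I2}).

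The computation is routine; the only delicate point is the justification of the interchange of summation and integration at the lower limit $t=0$ (equivalently, of the contour rotation onto $(0,iz)$ in the first argument), which is exactly why $\nu>0$ is required. Everything else---the cancellation of the gamma and exponential factors and the ratio-of-terms check---is mechanical, just as in the proof of (\ref{Int_Jnu^2b}).
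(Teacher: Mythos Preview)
Your proposal is correct. Your second, ``direct'' argument---taking $\mu=\nu$ in the Cauchy-product expansion (\ref{I_nu*I_mu}), integrating term by term on $(0,z)$, and recognising the ratio of consecutive terms as that of the ${}_{2}F_{3}$ via (\ref{c_k+1/c_k})--(\ref{p_F_q_c_k})---is exactly the paper's proof, only spelled out in more detail than the paper's one-sentence sketch.

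Your first argument, reducing to the Bessel identity (\ref{Int_Jnu^2b}) through the connection formula $I_{\nu}(t)=e^{-i\pi\nu/2}J_{\nu}(it)$ and the substitution $u=it$, is a genuinely different and slightly slicker route that the paper does not take. It trades the term-by-term computation for a one-line analytic continuation: since both sides of (\ref{Int_Jnu^2b}) are analytic in $z$ (the ${}_{2}F_{3}$ is entire and the prefactor is single-valued once a branch of $z^{2\nu}$ is fixed), replacing $z$ by $iz$ is legitimate, and the bookkeeping $i^{2\nu}=e^{i\pi\nu}$, $-(iz)^{2}=z^{2}$ is exactly as you describe. This approach has the advantage of making the parallelism between (\ref{Int_Jnu^2b}) and (\ref{Int_I2}) completely transparent, at the cost of invoking a branch choice; the paper's direct method avoids branches entirely but repeats essentially the same series manipulation already done for $J_{\nu}$.
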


\begin{proof}
Integrate term by term the following power series (Cauchy product) \cite[%
Eqn. 10.31.3]{DLMF}, 
\begin{equation}
I_{\nu }\left( z\right) I_{\mu }\left( z\right) =\left( \frac{z}{2}\right)
^{\mu +\nu }\sum_{n=0}^{\infty }\frac{\left( \nu +\mu +n+1\right) _{n}\left(
z/2\right) ^{2n}}{n!\, \Gamma \left( n+\mu +1\right) \Gamma \left( n+\nu
+1\right) },  \label{I_nu*I_mu}
\end{equation}%
taking $\mu =\nu $, and recasting the result as a hypergeometric series.
\end{proof}

\begin{remark}
If we take $\mu =-\nu $ in (\ref{I_nu*I_mu}), we will arrive at%
\begin{eqnarray}
&&\int \frac{I_{-\nu }\left( t\right) I_{\nu }\left( t\right) }{t}dt
\label{Int_Inu_I-nu_indefinida} \\
&=&\frac{\sin \pi \nu }{\pi \nu }\left[ \log t+\frac{t^{2}}{4\left( 1-\nu
^{2}\right) }\,_{3}F_{4}\left( \left. 
\begin{array}{c}
1,1,\frac{3}{2} \\ 
2,2,2+\nu ,2-\nu%
\end{array}%
\right\vert t^{2}\right) \right] .  \notag
\end{eqnarray}
\end{remark}

\begin{theorem}
If $z\neq 0$, $\left\vert \mathrm{arg}\ z\right\vert <\pi $, and $\nu >0$, $%
\nu \notin 
%TCIMACRO{\U{2124} }%
%BeginExpansion
\mathbb{Z}
%EndExpansion
$, the following integral holds true: 
\begin{eqnarray}
&&\int_{z}^{\infty }\frac{I_{\nu }\left( t\right) K_{\nu }\left( t\right) }{t%
}dt  \label{Int_Inu_Knu} \\
&=&\frac{1}{2\nu }\left[ \frac{\pi \csc \pi \nu \ \left( z/2\right) ^{2\nu }%
}{2\, \Gamma ^{2}\left( \nu +1\right) }\,_{2}F_{3}\left( \left. 
\begin{array}{c}
\nu ,\nu +\frac{1}{2} \\ 
\nu +1,\nu +1,2\nu +1%
\end{array}%
\right\vert z^{2}\right) \right.  \notag \\
&&-\left. \frac{z^{2}}{4\left( 1-\nu ^{2}\right) }\,_{3}F_{4}\left( \left. 
\begin{array}{c}
1,1,\frac{3}{2} \\ 
2,2,2-\nu ,2+\nu%
\end{array}%
\right\vert z^{2}\right) +\log \left( \frac{2}{z}\right) +\psi \left( \nu
\right) +\frac{1}{2\nu }\right] .  \notag
\end{eqnarray}
\end{theorem}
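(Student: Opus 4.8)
The plan is to mirror the computation that produced~(\ref{Int_Jnu_Ynu}), replacing Bessel functions by modified Bessel functions throughout. First I would use the definition of the Macdonald function~(\ref{Knu_def}) to split the indefinite integral,
\[
\int \frac{I_{\nu }\left( t\right) K_{\nu }\left( t\right) }{t}\,dt=\frac{\pi }{2}\csc \pi \nu \left( \int \frac{I_{-\nu }\left( t\right) I_{\nu }\left( t\right) }{t}\,dt-\int \frac{I_{\nu }^{2}\left( t\right) }{t}\,dt\right) .
\]
The second integral on the right is the indefinite form of~(\ref{Int_I2}) (the antiderivative that vanishes as $t\rightarrow 0^{+}$, up to a constant), and the first is precisely equation~(\ref{Int_Inu_I-nu_indefinida}) from the preceding Remark. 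Substituting both, simplifying with the reflection formula~(\ref{Gamma_reflection}), and using $\frac{\pi }{2}\csc \pi \nu \cdot \frac{\sin \pi \nu }{\pi \nu }=\frac{1}{2\nu }$, one obtains an antiderivative
\[
F\left( t\right) =\frac{\log t}{2\nu }+\frac{t^{2}}{8\nu \left( 1-\nu ^{2}\right) }{}_{3}F_{4}\!\left( \left. \begin{array}{c}1,1,\frac{3}{2} \\ 2,2,2-\nu ,2+\nu \end{array}\right\vert t^{2}\right) -\frac{\pi \csc \pi \nu \,\left( t/2\right) ^{2\nu }}{4\nu \,\Gamma ^{2}\left( \nu +1\right) }{}_{2}F_{3}\!\left( \left. \begin{array}{c}\nu ,\nu +\frac{1}{2} \\ \nu +1,\nu +1,2\nu +1\end{array}\right\vert t^{2}\right) ,
\]
the additive constant of integration being irrelevant for the definite integral.

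Since $I_{\nu }\left( t\right) K_{\nu }\left( t\right) =\frac{1}{2t}+O\left( t^{-3}\right) $ as $t\rightarrow +\infty $, the integrand of~(\ref{Int_Inu_Knu}) is $O\left( t^{-2}\right) $, so the improper integral converges and equals $\lim_{t\rightarrow \infty }F\left( t\right) -F\left( z\right) $. The term $-F\left( z\right) $ already reproduces, term by term, all the $z$-dependent contributions on the right-hand side of~(\ref{Int_Inu_Knu}) (note that the parameter lists $2-\nu ,2+\nu $ and $2+\nu ,2-\nu $ coincide). Hence the theorem reduces to the single asymptotic identity
\[
\lim_{t\rightarrow \infty }\left[ \log t+\frac{t^{2}}{4\left( 1-\nu ^{2}\right) }{}_{3}F_{4}\!\left( \left. \begin{array}{c}1,1,\frac{3}{2} \\ 2,2,2-\nu ,2+\nu \end{array}\right\vert t^{2}\right) -\frac{\pi \csc \pi \nu \,\left( t/2\right) ^{2\nu }}{2\,\Gamma ^{2}\left( \nu +1\right) }{}_{2}F_{3}\!\left( \left. \begin{array}{c}\nu ,\nu +\frac{1}{2} \\ \nu +1,\nu +1,2\nu +1\end{array}\right\vert t^{2}\right) \right] =\log 2+\psi \left( \nu \right) +\frac{1}{2\nu },
\]
which is the exact analogue of~(\ref{lim_Int_t->inf}), with the opposite sign.

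Establishing this limit is the main obstacle, and here the situation differs from Section~\ref{Section: Bessel functions}: the hypergeometric functions are now evaluated at the \emph{positive} argument $t^{2}\rightarrow +\infty $, so in place of~(\ref{pFp+1_infinito}) one must use the large-$z$ behaviour of ${}_{p}F_{p+1}\left( z\right) $ as $z\rightarrow +\infty $ along the positive real axis (see \cite[Sect.\ 16.11]{DLMF}), which, besides the algebraic terms $z^{-a_{k}}$, carries an exponentially large contribution of order $z^{\vartheta }e^{2\sqrt{z}}$ for a suitable exponent $\vartheta $, i.e.\ of order $t^{2\vartheta }e^{2t}$ once $z=t^{2}$. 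The key structural fact --- forced a priori by the boundedness of $F\left( t\right) $, since $F^{\prime }\left( t\right) =I_{\nu }\left( t\right) K_{\nu }\left( t\right) /t=O\left( t^{-2}\right) $ --- is that in the combination above the exponentially large pieces of the $t^{2}\,{}_{3}F_{4}$ term and of the $\left( t/2\right) ^{2\nu }\,{}_{2}F_{3}$ term must cancel identically, while the explicit $\log t$ must cancel against the logarithmic term generated by the algebraic part of the $t^{2}\,{}_{3}F_{4}$ expansion; only an $O\left( 1\right) $ remainder survives, and verifying these cancellations is a convenient consistency test. For the ${}_{2}F_{3}$ the surviving $O\left( 1\right) $ term is extracted exactly as in~(\ref{lim_2F3_resultado}), using~(\ref{Gamma_factorial}),~(\ref{Gamma_reflection}) and the duplication formula~(\ref{Gamma_duplication}). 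For the ${}_{3}F_{4}$ the repeated upper parameter $a_{1}=a_{2}=1$ gives a double pole, so --- exactly as in Section~\ref{Section: Bessel functions} --- one replaces one of the $1$'s by $1+\epsilon $, applies the large-$z$ formula, and lets $\epsilon \rightarrow 0$ using the Taylor expansions~(\ref{Gamma_Taylor})--(\ref{Gamma_z->0}) together with $\psi \left( \tfrac{1}{2}\right) =-\gamma -2\log 2$. Collecting the surviving $O\left( 1\right) $ pieces and simplifying with the digamma identities~(\ref{Digamma_1})--(\ref{Digamma_2}) then produces $\log 2+\psi \left( \nu \right) +\tfrac{1}{2\nu }$, and hence~(\ref{Int_Inu_Knu}). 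I expect the main difficulty to be precisely the bookkeeping of the cancellation of the dominant exponential and logarithmic terms in this positive-argument asymptotics while correctly isolating the surviving constant.

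As an alternative, one could instead deduce~(\ref{Int_Inu_Knu}) from the already proved evaluation~(\ref{Int_Jnu_Ynu}) by the analytic continuation $z\mapsto iz$, using $I_{\nu }\left( z\right) =e^{-i\pi \nu /2}J_{\nu }\left( ze^{i\pi /2}\right) $ together with the corresponding expression of $K_{\nu }$ in terms of $J_{\pm \nu }$; but this would require tracking the branch of $\left( z/2\right) ^{2\nu }$ and justifying the continuation of the improper integral, so the direct route above --- which also parallels the remaining proofs of this section --- seems preferable.
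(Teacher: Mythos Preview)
Your proposal is correct and follows essentially the same approach as the paper: you split $K_{\nu}$ via~(\ref{Knu_def}), assemble the indefinite integral from~(\ref{Int_I2}) and~(\ref{Int_Inu_I-nu_indefinida}) (this is exactly the paper's equation~(\ref{Int_Inu_Knu_indefinida})), and then compute the limit at infinity using the positive-argument asymptotic formula~(\ref{pFp+1_Exp}), with the $\epsilon$-regularization for the double pole handled just as in Section~\ref{Section: Bessel functions}. The paper's proof is only a sketch, and your account is in fact more explicit about the cancellation of the exponential and logarithmic contributions than the paper itself.
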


\begin{proof}
Expanding $K_{\nu }\left( z\right) $ in (\ref{Int_Inu_Knu})\ and then using (%
\ref{Int_I2})\ and (\ref{Int_Inu_I-nu_indefinida}), we obtain the following
result for the indefinite integral:%
\begin{eqnarray}
&&\int \frac{I_{\nu }\left( t\right) K_{\nu }\left( t\right) }{t}dt
\label{Int_Inu_Knu_indefinida} \\
&=&\frac{1}{2\nu }\left[ \log t-\frac{\pi \csc \pi \nu \ \left( t/2\right)
^{2\nu }}{2\, \Gamma ^{2}\left( \nu +1\right) }\,_{2}F_{3}\left( \left. 
\begin{array}{c}
\nu ,\nu +\frac{1}{2} \\ 
\nu +1,\nu +1,2\nu +1%
\end{array}%
\right\vert t^{2}\right) \right.  \notag \\
&&+\left. \frac{t^{2}}{4\left( 1-\nu ^{2}\right) }\,_{3}F_{4}\left( \left. 
\begin{array}{c}
1,1,\frac{3}{2} \\ 
2,2,2-\nu ,2+\nu%
\end{array}%
\right\vert t^{2}\right) \right] .  \notag
\end{eqnarray}%
To obtain (\ref{Int_Inu_Knu}), perform the asymptotic calculation
of the hypergeometric functions given in (\ref{Int_Inu_Knu_indefinida}),
rewriting (\ref{pFp+1_infinito})\ as%
\begin{eqnarray}
&&_{p}F_{p+1}\left( \left. 
\begin{array}{c}
a_{1},\ldots ,a_{p} \\ 
b_{1},\ldots b_{p+1}%
\end{array}%
\right\vert z\right)  \label{pFp+1_Exp} \\
&\sim &\frac{\prod_{j=1}^{p+1}\Gamma \left( b_{j}\right) }{2\sqrt{\pi }%
\prod_{k=1}^{p}\Gamma \left( a_{k}\right) }z^{\chi }e^{2\sqrt{z}}\left[
1+O\left( \frac{1}{\sqrt{z}}\right) \right]  \notag \\
&&+\frac{\prod_{j=1}^{p+1}\Gamma \left( b_{j}\right) }{\prod_{k=1}^{p}\Gamma
\left( a_{k}\right) }\sum_{k=1}^{p}\frac{\Gamma \left( a_{k}\right)
\prod_{j=1,j\neq k}^{p}\Gamma \left( a_{j}-a_{k}\right) }{%
\prod_{j=1}^{p+1}\Gamma \left( b_{j}-a_{k}\right) }\left( -z\right) ^{-a_{k}}%
\left[ 1+O\left( \frac{1}{z}\right) \right] .  \notag
\end{eqnarray}
\end{proof}

\begin{theorem}
If $z\neq 0$, $\left\vert \mathrm{arg}\ z\right\vert \leq \pi $, and $\nu
\notin 
%TCIMACRO{\U{2124} }%
%BeginExpansion
\mathbb{Z}
%EndExpansion
$, $\nu \neq \pm 1/2,\pm 3/2$, the following integral holds true:%
\begin{eqnarray}
&&\int_{z}^{\infty }\frac{K_{\nu }^{2}\left( t\right) }{t}dt
\label{Int_Knu^2} \\
&=&\frac{1}{8\nu }\left\{ \left( \frac{z}{2}\right) ^{-2\nu }\Gamma
^{2}\left( \nu \right) \,_{2}F_{3}\left( \left. 
\begin{array}{c}
\nu ,\frac{1}{2}+\nu \\ 
1+\nu ,1+\nu ,1+2\nu%
\end{array}%
\right\vert z^{2}\right) \right.  \notag \\
&&-\left( \frac{z}{2}\right) ^{2\nu }\Gamma ^{2}\left( -\nu \right)
\,_{2}F_{3}\left( \left. 
\begin{array}{c}
-\nu ,\frac{1}{2}-\nu \\ 
1-\nu ,1-\nu ,1-2\nu%
\end{array}%
\right\vert z^{2}\right)  \notag \\
&&+4\pi \csc \pi \nu \left[ \log \left( \frac{z}{2}\right) +\frac{z^{2}}{%
4\left( 1-\nu ^{2}\right) }\,_{3}F_{4}\left( \left. 
\begin{array}{c}
1,1,\frac{3}{2} \\ 
2,2,2-\nu ,2+\nu%
\end{array}%
\right\vert z^{2}\right) \right.  \notag \\
&&\quad \left. \left. 
\begin{array}{c}
%TCIMACRO{\TeXButton{TeX field}{\displaystyle} }%
%BeginExpansion
\displaystyle
%EndExpansion
\\ 
%TCIMACRO{\TeXButton{TeX field}{\displaystyle}}%
%BeginExpansion
\displaystyle%
%EndExpansion
\end{array}%
-\frac{1}{2\nu }-\psi \left( \nu \right) -\frac{\pi }{2}\cot \pi \nu \right]
\right\} .  \notag
\end{eqnarray}
\end{theorem}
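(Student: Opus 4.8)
The plan is to mimic, line for line, the proof given above for $\int_z^\infty Y_\nu^2(t)/t\,dt$, with the decomposition (\ref{Knu_def}) replacing (\ref{Ynu_def}). First I would square (\ref{Knu_def}) to get
\[
\frac{K_\nu^2(t)}{t}=\frac{\pi^2}{4\sin^2\pi\nu}\left[\frac{I_{-\nu}^2(t)}{t}-\frac{2\,I_\nu(t)I_{-\nu}(t)}{t}+\frac{I_\nu^2(t)}{t}\right],
\]
so that the associated indefinite integral splits into three primitives already available in this section: $\int I_\nu^2(t)/t\,dt$ is the primitive behind (\ref{Int_I2}); $\int I_{-\nu}^2(t)/t\,dt$ is the same with $\nu\to-\nu$; and $\int I_\nu(t)I_{-\nu}(t)/t\,dt$ is (\ref{Int_Inu_I-nu_indefinida}). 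Collecting these three expressions and using the reflection formula (\ref{Gamma_reflection}) to convert each $\Gamma^{-2}(1\pm\nu)$ into $\Gamma^{2}(\mp\nu)$ up to the $\csc^2\pi\nu$ prefactor, I obtain a closed-form primitive $\Phi(t)$ of $K_\nu^2(t)/t$ built from two terms $(t/2)^{\pm2\nu}\,{}_2F_3(\cdots\mid t^2)$, one term proportional to $t^2\,{}_3F_4(1,1,\tfrac32;2,2,2-\nu,2+\nu\mid t^2)$, and a $\log t$, carrying the $\csc\pi\nu$- and $\cot\pi\nu$-type coefficients that appear on the right of (\ref{Int_Knu^2}).

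Second, I would write $\int_z^\infty K_\nu^2(t)/t\,dt=\lim_{t\to\infty}\Phi(t)-\Phi(z)$. Since $K_\nu^2(t)=O(e^{-2t}/t)$, the primitive $\Phi$ has a finite limit at $+\infty$, even though each of its three constituent primitives grows like $e^{2t}$; hence the exponentially large parts, and the $\log t$ parts, must cancel in the combination. To make this explicit I would feed the exponential-argument asymptotic form (\ref{pFp+1_Exp}) into the three pieces. The two ${}_2F_3$ terms yield, after the gamma identities (\ref{Gamma_factorial}), (\ref{Gamma_reflection}), (\ref{Gamma_duplication}), an expansion of the same shape as (\ref{Asint_Y2_1}): an exponential term plus $\pm\tfrac1{2\nu}+O(1/t)$. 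The ${}_3F_4$ term has the double pole $a_1=a_2=1$, so I would again regularise $1\mapsto1+\epsilon$ in a top parameter, expand with the Taylor formulas (\ref{Gamma_Taylor})--(\ref{Gamma_z->0}), and let $\epsilon\to0$; this produces an expansion in the spirit of (\ref{lim_3F4_resultado})--(\ref{Asint_Y2_2}) but with the sign changes forced by the $+t^2$ argument, whose constant part supplies $\log(\mathrm{const})-\tfrac1{2\nu}-\psi(\nu)$ together with the $-\tfrac{\pi}{2}\cot\pi\nu$ coming from $\psi(1-\nu)-\psi(\nu)=\pi\cot\pi\nu$ via (\ref{Digamma_1})--(\ref{Digamma_2}). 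Summing the three expansions, the $e^{2t}$ and $\log t$ contributions cancel and what remains is $\lim_{t\to\infty}\Phi(t)$.

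Third, I would subtract $\Phi(z)$ and simplify: the factors $\Gamma^{-2}(1\pm\nu)$ become $\Gamma^{2}(\mp\nu)$ by (\ref{Gamma_reflection}), the term $\log t\big|_{t=z}$ merges with the $\log2$ thrown off by the duplication formula (\ref{Gamma_duplication}) into $\log(z/2)$, the digamma pieces are consolidated with (\ref{Digamma_1})--(\ref{Digamma_2}), and pulling the common $\csc\pi\nu$ out in front delivers precisely (\ref{Int_Knu^2}).

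The main obstacle, exactly as in the $Y_\nu^2$ computation, is the asymptotic bookkeeping at $t\to\infty$: one must carry the ${}_2F_3$ and ${}_3F_4$ expansions far enough to exhibit the cancellation of the $e^{2t}$ and $\log t$ contributions and still compute the $O(1)$ remainder correctly, which is where the $\epsilon$-regularisation of the ${}_3F_4$ double pole and the several gamma identities are indispensable. This is also where the hypotheses enter: $\nu\notin\mathbb{Z}$ keeps the reflection and $\csc/\cot$ factors finite, while the excluded values $\nu=\pm\tfrac12,\pm\tfrac32$ are precisely those for which some difference $b_j-\tfrac32$ among the ${}_3F_4$ parameters becomes a nonpositive integer, so the generic simple-pole form (\ref{pFp+1_Exp}) degenerates and a separate confluent treatment would be needed — these half-integer orders being in any case covered by the elementary closed forms obtainable from the recurrence relations.
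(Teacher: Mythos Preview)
Your proposal is correct and follows essentially the same approach as the paper: decompose $K_\nu^2$ via (\ref{Knu_def}) into $I_{\pm\nu}^2$ and $I_\nu I_{-\nu}$ pieces, use the primitives (\ref{Int_I2}) and (\ref{Int_Inu_I-nu_indefinida}), then extract the limit at infinity from the exponential-argument asymptotic (\ref{pFp+1_Exp}) with the $\epsilon$-regularisation of the ${}_3F_4$ double pole, and simplify with the gamma and digamma identities. The paper records exactly these intermediate steps as (\ref{Int_K2_indefinida}), (\ref{2F3_inf}) and (\ref{3F4_inf}); your added commentary on the exponential cancellation and on the role of the hypotheses is a welcome gloss but does not alter the route.
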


\begin{proof}
Consider the definition given in (\ref{Knu_def}) for $K_{\nu }\left( z\right) $\ to write the following. 
\begin{eqnarray*}
&&\int \frac{K_{\nu }^{2}\left( t\right) }{t}dt \\
&=&\frac{\pi ^{2}}{4}\csc ^{2}\pi \nu \left[ \int \frac{I_{-\nu }^{2}\left(
t\right) }{t}dt+\int \frac{I_{\nu }^{2}\left( t\right) }{t}dt-2\int \frac{%
I_{-\nu }\left( t\right) I_{\nu }\left( t\right) }{t}dt\right] .
\end{eqnarray*}%
Taking into account the results given in (\ref{Int_I2})\ and (\ref%
{Int_Inu_I-nu_indefinida}), we obtain%
\begin{eqnarray}
&&\int \frac{K_{\nu }^{2}\left( t\right) }{t}dt  \label{Int_K2_indefinida} \\
&=&\frac{\pi ^{2}}{4}\csc ^{2}\pi \nu \left\{ \frac{\left( t/2\right) ^{2\nu
}}{2\nu\, \Gamma ^{2}\left( \nu +1\right) }\,_{2}F_{3}\left( \left. 
\begin{array}{c}
\nu ,\nu +\frac{1}{2} \\ 
\nu +1,\nu +1,2\nu +1%
\end{array}%
\right\vert t^{2}\right) \right.   \notag \\
&&-\frac{\left( t/2\right) ^{-2\nu }}{2\nu\, \Gamma ^{2}\left( 1-\nu \right) }%
\,_{2}F_{3}\left( \left. 
\begin{array}{c}
-\nu ,\frac{1}{2}-\nu  \\ 
1-\nu ,1-\nu ,1-2\nu 
\end{array}%
\right\vert t^{2}\right)   \notag \\
&&-\left. 2\frac{\sin \pi \nu }{\pi \nu }\left[ \log t+\frac{t^{2}}{4\left(
1-\nu ^{2}\right) }\,_{3}F_{4}\left( \left. 
\begin{array}{c}
1,1,\frac{3}{2} \\ 
2,2,2+\nu ,2-\nu 
\end{array}%
\right\vert t^{2}\right) \right] \right\} .  \notag
\end{eqnarray}%
According to (\ref{pFp+1_Exp}), we have the following asymptotic
expansion as $t\rightarrow \infty $ 
\begin{eqnarray}
&&\pm \frac{\left( t/2\right) ^{\pm 2\nu }}{2\nu\, \Gamma ^{2}\left( \nu \pm
1\right) }\,_{2}F_{3}\left( \left. 
\begin{array}{c}
\pm \nu ,\frac{1}{2}\pm \nu  \\ 
1\pm \nu ,1\pm \nu ,1\pm 2\nu 
\end{array}%
\right\vert t^{2}\right)   \label{2F3_inf} \\
&\approx &\frac{e^{2t}}{4\pi t^{2}}+\frac{i\left( -1\right) ^{\mp \nu }}{\pi
t}\pm \frac{\left( -1\right) ^{\mp \nu }}{2\nu }.  \notag
\end{eqnarray}%
Also, 
\begin{eqnarray}
&&\lim_{\epsilon \rightarrow 0}\, \frac{t^{2}}{4\left( 1-\nu ^{2}\right) }%
\,_{3}F_{4}\left( \left. 
\begin{array}{c}
1,1+\epsilon ,\frac{3}{2} \\ 
2,2,2+\nu ,2-\nu 
\end{array}%
\right\vert t^{2}\right)   \label{3F4_inf} \\
&\approx &\frac{i\nu \cot \pi \nu }{t}+\frac{\nu e^{2t}\csc \pi \nu }{4t^{2}}%
+\frac{\psi \left( 1+\nu \right) +\psi \left( 1-\nu \right) -\log \left(
-t^{2}\right) }{2}+\log 2.  \notag
\end{eqnarray}%
Taking into account (\ref{2F3_inf})\ and (\ref{3F4_inf})\ in (\ref%
{Int_K2_indefinida}), after some simplification, we eventually arrive at (%
\ref{Int_Knu^2}).
\end{proof}

Next, following a similar derivation as the one given in \cite{Dunster} for
the integral representation of $\partial J_{\nu }/\partial \nu $, we obtain
an integral representation of $\partial I_{\nu }/\partial \nu $.

\begin{theorem}
For $\nu >0$ and $z\neq 0$, $\left\vert \mathrm{arg}\ z\right\vert \leq \pi $%
, we have 
\begin{equation}
\frac{\partial I_{\nu }\left( z\right) }{\partial \nu }=-2\nu \left[ I_{\nu
}\left( z\right) \int_{z}^{\infty }\frac{K_{\nu }\left( t\right) I_{\nu
}\left( t\right) }{t}dt+K_{\nu }\left( z\right) \int_{0}^{z}\frac{I_{\nu
}^{2}\left( t\right) }{t}dt\right] .  \label{DInu_int}
\end{equation}
\end{theorem}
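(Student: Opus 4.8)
The plan is to identify $u(z):=\partial I_{\nu}(z)/\partial\nu$ as a particular solution of an inhomogeneous modified Bessel equation and to reconstruct it by variation of parameters, following the same scheme used in \cite{Dunster} for $\partial J_{\nu}/\partial\nu$. Since $I_{\nu}(z)$ is entire in $\nu$, differentiation with respect to $\nu$ commutes with $\partial_{z}$, so differentiating the modified Bessel equation $z^{2}w''+zw'-(z^{2}+\nu^{2})w=0$ at fixed $z$ with $w=I_{\nu}$ gives
\begin{equation*}
z^{2}u''+zu'-(z^{2}+\nu^{2})u=2\nu\,I_{\nu}(z),
\end{equation*}
that is, $u$ satisfies the modified Bessel equation of order $\nu$ with forcing term $2\nu I_{\nu}$.

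Next I would solve this by variation of parameters using the independent homogeneous solutions $I_{\nu}$ and $K_{\nu}$, whose Wronskian is $K_{\nu}(z)I_{\nu}'(z)-K_{\nu}'(z)I_{\nu}(z)=1/z$ \cite[Eqn.~10.28.2]{DLMF}. Writing the equation in normal form $u''+z^{-1}u'-(1+\nu^{2}/z^{2})u=2\nu z^{-2}I_{\nu}$ and applying the standard formula yields the particular solution
\begin{equation*}
u_{p}(z)=2\nu\left[I_{\nu}(z)\int^{z}\frac{K_{\nu}(t)I_{\nu}(t)}{t}\,dt-K_{\nu}(z)\int^{z}\frac{I_{\nu}^{2}(t)}{t}\,dt\right].
\end{equation*}
The two free constants of integration are equivalent to the freedom of adding $A\,I_{\nu}+B\,K_{\nu}$. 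I would choose the lower limits so that the integrals converge: $\int_{0}^{z}t^{-1}I_{\nu}^{2}(t)\,dt$ converges because $t^{-1}I_{\nu}^{2}(t)\sim t^{2\nu-1}/(4^{\nu}\Gamma(\nu+1)^{2})$ as $t\to0^{+}$ (this uses $\nu>0$), and $\int_{z}^{\infty}t^{-1}K_{\nu}(t)I_{\nu}(t)\,dt$ converges because $t^{-1}K_{\nu}(t)I_{\nu}(t)\sim 1/(2t^{2})$ as $t\to\infty$. Writing $\int_{\infty}^{z}=-\int_{z}^{\infty}$ turns $u_{p}$ into precisely the right-hand side of (\ref{DInu_int}).

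It remains to show that $u=\partial I_{\nu}/\partial\nu$ equals $u_{p}$, i.e. that the homogeneous part vanishes; this is the crux. The difference $u-u_{p}$ solves the homogeneous modified Bessel equation, so $u-u_{p}=A\,I_{\nu}+B\,K_{\nu}$. Letting $z\to0^{+}$: the series (\ref{DInu_series}) gives $\partial I_{\nu}/\partial\nu=O(z^{\nu}\log z)$, and the right-hand side of (\ref{DInu_int}) is likewise $O(z^{\nu}\log z)$ --- the term $K_{\nu}(z)\int_{0}^{z}$ is $O(z^{\nu})$ by (\ref{Int_I2}), while $I_{\nu}(z)\int_{z}^{\infty}$ supplies the $z^{\nu}\log z$ piece because $t^{-1}K_{\nu}(t)I_{\nu}(t)\sim 1/(2\nu t)$ near $t=0$. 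Since neither side carries a $z^{-\nu}$ term while $K_{\nu}(z)$ does, $B=0$. Letting $z\to\infty$ along the positive axis and differentiating the classical expansion $I_{\nu}(z)\sim e^{z}(2\pi z)^{-1/2}\big(1-(4\nu^{2}-1)/(8z)+\cdots\big)$ with respect to $\nu$ gives $\partial I_{\nu}/\partial\nu\sim-\nu e^{z}z^{-1}(2\pi z)^{-1/2}$; the right-hand side of (\ref{DInu_int}) has the same leading term, since $\int_{z}^{\infty}t^{-1}K_{\nu}I_{\nu}\,dt\sim 1/(2z)$ and the $K_{\nu}(z)\int_{0}^{z}$ term is of lower order. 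Hence $u-u_{p}$ is of smaller order than $e^{z}z^{-1/2}\sim I_{\nu}(z)$, forcing $A=0$. Finally, the identity, now established for $z>0$, extends to $|\arg z|<\pi$ by analytic continuation (both members being analytic there, the integration paths being taken along the ray through $z$), and to $\arg z=\pm\pi$ by continuity.

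The step I expect to be the main obstacle is the justification in the last paragraph: making rigorous that the $\nu$-derivative of the asymptotic expansion of $I_{\nu}$ at infinity is the asymptotic expansion of $\partial I_{\nu}/\partial\nu$ --- legitimate because that expansion is uniform for $\nu$ in compact sets, so one may differentiate it under a Cauchy integral in $\nu$ --- together with the careful bookkeeping of the two limiting regimes needed to conclude $A=B=0$. The variation-of-parameters computation and the convergence checks are routine.
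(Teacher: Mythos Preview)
Your proposal is correct and follows essentially the same route as the paper: differentiate the modified Bessel equation in $\nu$, solve the resulting inhomogeneous equation by variation of parameters against $I_{\nu},K_{\nu}$ with the Wronskian $-1/z$, fix the lower limits by convergence, and eliminate the homogeneous constants by examining the $z\to0$ and $z\to\infty$ regimes. Your additional remarks on justifying the $\nu$-differentiation of the large-$z$ asymptotics via uniformity and a Cauchy integral, and on analytic continuation to the full sector, go slightly beyond the paper's presentation but do not change the argument.
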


\begin{proof}
Any linear combination of the modified Bessel functions $I_{\nu }\left(
z\right) $ and $K_{\nu }\left( z\right) $ satisfies the following ODE \cite[Eqn. 5.7.7]{Lebedev}, 
\begin{equation}
u^{\prime \prime }\left( z\right) +\frac{1}{z}u^{\prime }\left( z\right)
-\left( 1+\frac{\nu ^{2}}{z^{2}}\right) u\left( z\right) =0.
\label{Eqn_Modified_Bessel}
\end{equation}%
Consider now $u\left( z\right) =I_{\nu }\left( z\right) $, and perform\ the
derivative with respect to the order in (\ref{Eqn_Modified_Bessel}), to
obtain%
\begin{equation*}
\frac{d^{2}}{dz^{2}}\left( \frac{\partial I_{\nu }\left( z\right) }{\partial
\nu }\right) +\frac{1}{z}\frac{d\,}{dz}\left( \frac{\partial I_{\nu }\left(
z\right) }{\partial \nu }\right) -\left( 1+\frac{\nu ^{2}}{z^{2}}\right) 
\frac{\partial I_{\nu }\left( z\right) }{\partial \nu }=\frac{2\nu }{z^{2}}%
I_{\nu }\left( z\right) .
\end{equation*}%
Applying now the method of variation of parameters \cite[Sect. 16.516]%
{Gradshteyn}, taking into account the following Wronskian \cite[Eqn. 5.9.5]%
{Lebedev}%
\begin{equation*}
W\left[ I_{\nu }\left( z\right) ,K_{\nu }\left( z\right) \right] =-\frac{1}{z%
},
\end{equation*}%
the general solution of (\ref{Eqn_Modified_Bessel})\ is given by%
\begin{eqnarray}
\frac{\partial I_{\nu }\left( z\right) }{\partial \nu } &=&-2\nu \left[
I_{\nu }\left( z\right) \int_{z}^{\infty }\frac{K_{\nu }\left( t\right)
I_{\nu }\left( t\right) }{t}dt+K_{\nu }\left( z\right) \int_{0}^{z}\frac{%
I_{\nu }^{2}\left( t\right) }{t}dt\right]  \label{DInu_int_1} \\
&&+a_{\nu }I_{\nu }\left( z\right) +b_{\nu }K_{\nu }\left( z\right) ,  \notag
\end{eqnarray}%
where $a_{\nu }$ and $b_{\nu }$ are constants that can be determined as
follows. First, notice that from the series representation (\ref{DInu_series}%
), for $\nu >0$ we have that%
\begin{equation}
\lim_{z\rightarrow 0}\, \frac{\partial I_{\nu }\left( z\right) }{\partial \nu }%
=\lim_{z\rightarrow 0}\, I_{\nu }\left( z\right) \log \left( \frac{z}{2}\right)
=0,  \label{Lim_DInu_z->0}
\end{equation}%
since, according to \cite[Eqn. 5.16.4]{Lebedev},%
\begin{equation}
I_{\nu }\left( z\right) \approx \frac{\left( z/2\right) ^{\nu }}{\Gamma
\left( 1+\nu \right) },\quad z\rightarrow 0.  \label{Lim_Inu_z->0}
\end{equation}%
Also, from (\ref{Int_I2}), we have%
\begin{equation}
\int_{0}^{z}\frac{I_{\nu }^{2}\left( t\right) }{t}dt\approx \frac{\left(
z/2\right) ^{2\nu }}{2\nu \Gamma ^{2}\left( \nu +1\right) },\quad
z\rightarrow 0,  \label{Lim_Int_I2_z->0}
\end{equation}%
and from (\ref{Int_Inu_Knu}), we have as well%
\begin{equation}
\int_{z}^{\infty }\frac{I_{\nu }\left( t\right) K_{\nu }\left( t\right) }{t}%
dt\approx \frac{1}{2\nu }\log \left( \frac{2}{z}\right) ,\quad z\rightarrow
0.  \label{Lim_Int_Inu_Knu_z->0}
\end{equation}%
Therefore, performing the limit $z\rightarrow 0$ on both sides of (\ref%
{DInu_int_1})\ and taking into account (\ref{Lim_DInu_z->0})-(\ref%
{Lim_Int_Inu_Knu_z->0}), we conclude that $b_{\nu }=0$ since $K_{\nu
}\left( z\right) $ is divergent as $z\rightarrow 0$ \cite[Eqn. 5.16.4]%
{Lebedev}. Thereby, rewrite (\ref{DInu_int_1})\ as%
\begin{eqnarray}
&&\frac{\partial I_{\nu }\left( z\right) }{\partial \nu }  \label{DInu_int_2}
\\
&=&-2\nu \left\{ I_{\nu }\left( z\right) \left[ a_{\nu }+\int_{z}^{\infty }%
\frac{K_{\nu }\left( t\right) I_{\nu }\left( t\right) }{t}dt\right] +K_{\nu
}\left( z\right) \int_{0}^{z}\frac{I_{\nu }^{2}\left( t\right) }{t}%
dt\right\} .  \notag
\end{eqnarray}%
Consider now the following asymptotic expansions \cite[Eqns. 10.40.1-2]%
{DLMF} as $z\rightarrow \infty $,%
\begin{eqnarray}
I_{\nu }\left( z\right) &=&\frac{e^{z}}{\sqrt{2\pi z}}\left[ 1-\frac{4\nu
^{2}-1}{8z}+O\left( \frac{1}{z^{2}}\right) \right] ,  \label{Inu_z->inf} \\
K_{\nu }\left( z\right) &=&\sqrt{\frac{\pi }{2z}}e^{-z}\left[ 1+\frac{4\nu
^{2}-1}{8z}+O\left( \frac{1}{z^{2}}\right) \right] .  \label{Knu_z->inf}
\end{eqnarray}%
On the one hand, performing the order derivative in (\ref{Inu_z->inf}), the
asymptotic expansion on the LHS\ of (\ref{Inu_z->inf})\ is 
\begin{equation}
\frac{\partial I_{\nu }\left( z\right) }{\partial \nu }\approx -\frac{\nu
\,e^{z}}{\sqrt{2\pi }z^{3/2}},\quad z\rightarrow \infty .
\label{DInu_z->inf}
\end{equation}%
On the other hand, taking into account (\ref{Inu_z->inf})\ and (\ref%
{Knu_z->inf}), we have%
\begin{equation}
\int_{z}^{\infty }\frac{K_{\nu }\left( t\right) I_{\nu }\left( t\right) }{t}%
dt\approx \frac{1}{2z},\quad z\rightarrow \infty .
\label{Int_Inu_Knu_z->inf}
\end{equation}%
Also, from (\ref{Int_I2})\ and (\ref{pFp+1_Exp}), we have, 
\begin{equation}
\int_{0}^{z}\frac{I_{\nu }^{2}\left( t\right) }{t}dt\approx \frac{e^{2z}}{%
4\pi z^{2}},\quad z\rightarrow \infty .  \label{Int_Inu2_z->inf}
\end{equation}%
Therefore, from (\ref{Inu_z->inf}), (\ref{Knu_z->inf}), (\ref%
{Int_Inu_Knu_z->inf}), and (\ref{Int_Inu2_z->inf}), the asymptotic expansion
on the RHS\ of (\ref{DInu_int_2})\ is%
\begin{equation}
\frac{\partial I_{\nu }\left( z\right) }{\partial \nu }\approx -2\nu \frac{%
e^{z}}{\sqrt{2\pi z}}\left( \frac{1}{2z}+a_{\nu }\right) ,\quad z\rightarrow
\infty .  \label{DInu_z->inf_2}
\end{equation}%
Comparing (\ref{DInu_z->inf})\ to (\ref{DInu_z->inf_2}), we conclude that $%
a_{\nu }=0$, hence we obtain the integral representation given in (\ref%
{DInu_int}).
\end{proof}

Once we have set the integral representation of $\partial I_{\nu }/\partial
\nu $, applying the results given in (\ref{Int_I2})\ and (\ref{Int_Inu_Knu}%
), we can rewrite (\ref{DInu_int})\ in closed-form as follows: 
\begin{eqnarray}
&&\frac{\partial I_{\nu }\left( z\right) }{\partial \nu }
\label{DInu_resultado} \\
&=&I_{\nu }\left( z\right) \left[ \frac{z^{2}}{4\left( 1-\nu ^{2}\right) }%
\,_{3}F_{4}\left( \left. 
\begin{array}{c}
1,1,\frac{3}{2} \\ 
2,2,2-\nu ,2+\nu%
\end{array}%
\right\vert z^{2}\right) +\log \left( \frac{z}{2}\right) -\psi \left( \nu
\right) -\frac{1}{2\nu }\right]  \notag \\
&&-I_{-\nu }\left( z\right) \frac{\pi \csc \pi \nu }{2\, \Gamma ^{2}\left( \nu
+1\right) }\left( \frac{z}{2}\right) ^{2\nu }\,_{2}F_{3}\left( \left. 
\begin{array}{c}
\nu ,\frac{1}{2}+\nu \\ 
1+\nu ,1+\nu ,1+2\nu%
\end{array}%
\right\vert z^{2}\right) ,  \notag
\end{eqnarray}%
which is equivalent to the result given in \cite{BrychovNew}.

Also, according to (\ref{DKnu_(DInu)}) and the above result (\ref%
{DInu_resultado}), after some simplification, we arrive at%
\begin{eqnarray}
&&\frac{\partial K_{\nu }\left( z\right) }{\partial \nu }
\label{DKnu_resultado} \\
&=&\frac{\pi }{2}\csc \pi \nu \left\{ \pi \cot \pi \nu \,I_{\nu }\left(
z\right) -\left[ I_{\nu }\left( z\right) +I_{-\nu }\left( z\right) \right] 
\begin{array}{c}
%TCIMACRO{\TeXButton{TeX field}{\displaystyle} }%
%BeginExpansion
\displaystyle
%EndExpansion
\\ 
%TCIMACRO{\TeXButton{TeX field}{\displaystyle}}%
%BeginExpansion
\displaystyle%
%EndExpansion
\end{array}%
\right.  \notag \\
&&\left. \left[ \frac{z^{2}}{4\left( 1-\nu ^{2}\right) }\,_{3}F_{4}\left(
\left. 
\begin{array}{c}
1,1,\frac{3}{2} \\ 
2,2,2-\nu ,2+\nu%
\end{array}%
\right\vert z^{2}\right) +\log \left( \frac{z}{2}\right) -\psi \left( \nu
\right) -\frac{1}{2\nu }\right] \right\}  \notag \\
&&+\frac{1}{4}\left\{ I_{-\nu }\left( z\right) \Gamma ^{2}\left( -\nu
\right) \left( \frac{z}{2}\right) ^{2\nu }\,_{2}F_{3}\left( \left. 
\begin{array}{c}
\nu ,\frac{1}{2}+\nu \\ 
1+\nu ,1+\nu ,1+2\nu%
\end{array}%
\right\vert z^{2}\right) \right.  \notag \\
&&\quad -\left. I_{\nu }\left( z\right) \Gamma ^{2}\left( \nu \right) \left( 
\frac{z}{2}\right) ^{-2\nu }\,_{2}F_{3}\left( \left. 
\begin{array}{c}
-\nu ,\frac{1}{2}-\nu \\ 
1-\nu ,1-\nu ,1-2\nu%
\end{array}%
\right\vert z^{2}\right) \right\} ,  \notag
\end{eqnarray}%
which is equivalent to the result given in \cite{BrychovNew}.

Finally, taking into account the main results of this Section, we can derive
an integral representation for the order derivative of the Macdonald
function $K_{\nu }\left( z\right) $.

\begin{theorem}
For $\nu >0$ and $z\neq 0$, $\left\vert \mathrm{arg}\ z\right\vert \leq \pi $%
, we have%
\begin{equation}
\frac{\partial K_{\nu }\left( z\right) }{\partial \nu }=2\nu \left[ K_{\nu
}\left( z\right) \int_{z}^{\infty }\frac{I_{\nu }\left( t\right) K_{\nu
}\left( t\right) }{t}dt-I_{\nu }\left( z\right) \int_{z}^{\infty }\frac{%
K_{\nu }^{2}\left( t\right) }{t}dt\right] .  \label{DKnu_int}
\end{equation}
\end{theorem}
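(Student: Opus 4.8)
The plan is to imitate the derivation of (\ref{DInu_int}): realize $\partial K_{\nu}/\partial\nu$ as the solution of a second-order inhomogeneous ODE, solve that ODE by variation of parameters with the fundamental system $\{I_{\nu},K_{\nu}\}$, and then kill the two free constants by examining the behaviour at infinity.

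First I would use the fact that $u=K_{\nu}(z)$ satisfies the modified Bessel equation (\ref{Eqn_Modified_Bessel}). Differentiating that equation with respect to $\nu$ yields
\[
\frac{d^{2}}{dz^{2}}\!\left(\frac{\partial K_{\nu}(z)}{\partial\nu}\right)+\frac{1}{z}\,\frac{d}{dz}\!\left(\frac{\partial K_{\nu}(z)}{\partial\nu}\right)-\left(1+\frac{\nu^{2}}{z^{2}}\right)\frac{\partial K_{\nu}(z)}{\partial\nu}=\frac{2\nu}{z^{2}}\,K_{\nu}(z),
\]
the same inhomogeneous equation as in the proof of (\ref{DInu_int}) but with $K_{\nu}$ in place of $I_{\nu}$ on the right. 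Applying variation of parameters with the Wronskian $W[I_{\nu},K_{\nu}]=-1/z$, and taking $\infty$ as the base point of \emph{both} antiderivatives --- which is legitimate since $I_{\nu}(t)K_{\nu}(t)/t=O(t^{-2})$ and $K_{\nu}^{2}(t)/t=O(t^{-2}e^{-2t})$ are integrable at infinity --- the particular solution coincides exactly with the bracketed expression on the right-hand side of (\ref{DKnu_int}). Hence $\partial K_{\nu}(z)/\partial\nu$ equals that expression plus a homogeneous term $a_{\nu}I_{\nu}(z)+b_{\nu}K_{\nu}(z)$, with $a_{\nu},b_{\nu}$ depending only on $\nu$; for complex $z$ with $|\arg z|\le\pi$ the identity then follows by analytic continuation.

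The core of the argument is to show $a_{\nu}=b_{\nu}=0$, which I would do by letting $z\to\infty$. Differentiating the asymptotic expansion (\ref{Knu_z->inf}) with respect to $\nu$ gives $\partial K_{\nu}/\partial\nu=\sqrt{\pi/(2z)}\,e^{-z}\,[\nu/z+O(z^{-2})]$, so the left-hand side is $O(z^{-3/2}e^{-z})$; in particular it decays exponentially and has no term of order $z^{-1/2}e^{-z}$. On the right-hand side, (\ref{Int_Inu_Knu_z->inf}) gives $K_{\nu}(z)\int_{z}^{\infty}I_{\nu}(t)K_{\nu}(t)/t\,dt\sim\tfrac{1}{2z}K_{\nu}(z)=O(z^{-3/2}e^{-z})$, while $I_{\nu}(z)\int_{z}^{\infty}K_{\nu}^{2}(t)/t\,dt=O(z^{-1/2}e^{z})\cdot O(z^{-2}e^{-2z})=O(z^{-5/2}e^{-z})$; the homogeneous part contributes $a_{\nu}I_{\nu}(z)\sim a_{\nu}e^{z}/\sqrt{2\pi z}$ and $b_{\nu}K_{\nu}(z)\sim b_{\nu}\sqrt{\pi/(2z)}\,e^{-z}$. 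Since the left side decays, the exponentially growing term forces $a_{\nu}=0$; then comparing the coefficient of $z^{-1/2}e^{-z}$ (absent on the left, equal to $b_{\nu}\sqrt{\pi/2}$ on the right) forces $b_{\nu}=0$, while the surviving $z^{-3/2}e^{-z}$ terms agree. This yields (\ref{DKnu_int}).

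The step I expect to be the main obstacle is the bookkeeping just described: one must justify differentiating the asymptotic series of $K_{\nu}$ term by term, and keep careful track of which power of $z$ each summand on the right contributes to, so that the subleading corrections do not interfere with the orders being matched. The limit $z\to 0$ is less convenient here than in the $I_{\nu}$ case, because $K_{\nu}(z)$, the particular solution, and the term $b_{\nu}K_{\nu}(z)$ all diverge like $(z/2)^{-\nu}$; one can still extract $b_{\nu}=0$ from the coefficient of $(z/2)^{-\nu}$ using (\ref{Int_Inu_Knu}) and (\ref{Int_Knu^2}), but it does not detect $a_{\nu}$, so the limit at infinity is the clean route. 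Finally, substituting the closed forms (\ref{Int_Inu_Knu}) and (\ref{Int_Knu^2}) into (\ref{DKnu_int}) and simplifying should reproduce the known closed form (\ref{DKnu_resultado}), providing an independent consistency check.
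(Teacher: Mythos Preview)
Your proposal is correct, but it takes a genuinely different route from the paper. The paper does not re-run the variation-of-parameters argument for $K_{\nu}$; instead it simply \emph{verifies} (\ref{DKnu_int}) algebraically: since $\partial K_{\nu}/\partial\nu$ is already known in closed form from (\ref{DKnu_resultado}) (obtained via (\ref{DKnu_(DInu)}) from (\ref{DInu_resultado})), and the two integrals on the right of (\ref{DKnu_int}) have been computed in (\ref{Int_Inu_Knu}) and (\ref{Int_Knu^2}), the proof consists of substituting those closed forms into the right-hand side, using (\ref{Knu_def}) and the reflection formula (\ref{Gamma_reflection}), and checking that the result collapses to (\ref{DKnu_resultado}).

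Your argument instead parallels the paper's derivation of (\ref{DInu_int}): you realise $\partial K_{\nu}/\partial\nu$ as a solution of the inhomogeneous modified Bessel equation, build the particular solution with base point at $\infty$, and kill both homogeneous constants by matching the $z^{-1/2}e^{z}$ and $z^{-1/2}e^{-z}$ scales as $z\to\infty$. This is logically independent of the closed-form expressions (\ref{Int_Inu_Knu}), (\ref{Int_Knu^2}), (\ref{DKnu_resultado}) and explains structurally why the same two integrals reappear; you also correctly note that $z\to 0$ is less convenient here because several terms diverge like $(z/2)^{-\nu}$. The trade-off is that the paper's verification is shorter once those three closed forms are in hand. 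Amusingly, your proposed ``consistency check'' at the end --- substituting (\ref{Int_Inu_Knu}) and (\ref{Int_Knu^2}) into (\ref{DKnu_int}) and recovering (\ref{DKnu_resultado}) --- is precisely the paper's entire proof.
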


\begin{proof}
Substituting (\ref{Int_Inu_Knu})\ and (\ref{Int_Knu^2})\ in (\ref{DKnu_int}%
), taking into account the definition given in (\ref{Knu_def})\ of the Macdonald
function $K_{\nu }\left( z\right) $, and the reflection formula (\ref%
{Gamma_reflection}), after some algebra we arrive at (\ref{DKnu_int}).
\end{proof}

\section{Alternative expressions for integral order\label{Section: Meijer_G}}

So far, we have obtained closed-form expressions for $\partial J_{\nu }/\partial \nu $ and $\partial Y_{\nu
}/\partial \nu $ in (\ref{DJnu_closed_form})\ and (\ref{DYnu_closed_form}),
and for $\partial I_{\nu }/\partial \nu $ and 
$\partial K_{\nu }/\partial \nu $ in (\ref{DInu_resultado})\ and (\ref%
{DKnu_resultado}). However, these expressions cannot be applied for $\nu \in 
%TCIMACRO{\U{2124} }%
%BeginExpansion
\mathbb{Z}
%EndExpansion
$. Nonetheless, we can derive alternative expressions that avoid this problem using Meijer-$G$ functions. This function is usually defined by the following Mellin-Barnes integral representation \cite[Eqn. 16.17.1]{DLMF}:%
\begin{eqnarray}
&&G_{p,q}^{m,n}\left( z\left\vert 
\begin{array}{c}
a_{1},\ldots ,a_{p} \\ 
b_{1},\ldots ,b_{q}%
\end{array}%
\right. \right)  \label{Meijer_G_def} \\
&=&\frac{1}{2\pi i}\int_{L}\frac{\prod_{\ell =1}^{m}\Gamma \left( b_{\ell
}-s\right) \prod_{\ell =1}^{n}\Gamma \left( 1-a_{\ell }+s\right) }{%
\prod_{\ell =m}^{q-1}\Gamma \left( 1-b_{\ell +1}+s\right) \prod_{\ell
=n}^{p-1}\Gamma \left( a_{\ell +1}-s\right) }z^{s}ds,  \notag
\end{eqnarray}%
where the integration path $L$ separates the poles of the factors $\Gamma \left(
b_{\ell }-s\right) $ from those of the factors $\Gamma \left( 1-a_{\ell
}+s\right) $. Also, $m$ and $n$ are integers such that $0\leq m\leq q$ and $%
0\leq n\leq p$, and none of $a_{k}-b_{j}$ is a positive integer when $1\leq
k\leq n$ and $1\leq j\leq m$.

First, we introduce some properties of the Meijer-$G$ function that will be
used below. The Meijer-$G$ function satisfies the
following reduction formulas \cite[Eqns. 8.2.2(8)-(9)]{Prudnikov3}: 
\begin{equation}
G_{p,q}^{m,n}\left( z\left\vert 
\begin{array}{c}
a_{1},\ldots ,a_{p} \\ 
b_{1},\ldots ,b_{q-1},a_{1}%
\end{array}%
\right. \right) =G_{p-1,q-1}^{m,n-1}\left( z\left\vert 
\begin{array}{c}
a_{2},\ldots ,a_{p} \\ 
b_{1},\ldots ,b_{q-1}%
\end{array}%
\right. \right) ,  \label{Meijer_reduction_a1}
\end{equation}%
and%
\begin{equation}
G_{p,q}^{m,n}\left( z\left\vert 
\begin{array}{c}
a_{1},\ldots ,a_{p-1},b_{1} \\ 
b_{1},\ldots ,b_{q}%
\end{array}%
\right. \right) =G_{p-1,q-1}^{m-1,n}\left( z\left\vert 
\begin{array}{c}
a_{1},\ldots ,a_{p-1} \\ 
b_{2},\ldots ,b_{q}%
\end{array}%
\right. \right) .  \label{Meijer_reduction_ap}
\end{equation}

Also, it satisfies the following derivative formulas \cite[Eqns.
8.2.2(36)-(37)]{Prudnikov3}:%
\begin{eqnarray}
&&\frac{d}{dz}\left[ z^{1-a_{1}}\ G_{p,q}^{m,n}\left( z\left\vert 
\begin{array}{c}
a_{1},\ldots ,a_{p} \\ 
b_{1},\ldots ,b_{q}%
\end{array}%
\right. \right) \right]  \label{D_Meijer_a1} \\
&=&z^{-a_{1}}\ G_{p,q}^{m,n}\left( z\left\vert 
\begin{array}{c}
a_{1}-1,\ldots ,a_{p} \\ 
b_{1},\ldots ,b_{q}%
\end{array}%
\right. \right) ,\quad n\geq 1,  \notag
\end{eqnarray}%
and%
\begin{eqnarray}
&&\frac{d}{dz}\left[ z^{1-a_{p}}\ G_{p,q}^{m,n}\left( z\left\vert 
\begin{array}{c}
a_{1},\ldots ,a_{p} \\ 
b_{1},\ldots ,b_{q}%
\end{array}%
\right. \right) \right]  \label{D_Meijer_ap} \\
&=&-z^{-a_{p}}\ G_{p,q}^{m,n}\left( z\left\vert 
\begin{array}{c}
a_{1},\ldots ,a_{p}-1 \\ 
b_{1},\ldots ,b_{q}%
\end{array}%
\right. \right) ,\quad n\leq p-1.  \notag
\end{eqnarray}

The translation formula in the parameters reads as \cite[Eqn. 8.2.2(15)]%
{Prudnikov3},%
\begin{equation}
z^{\alpha }G_{p,q}^{m,n}\left( z\left\vert 
\begin{array}{c}
a_{1},\ldots ,a_{p} \\ 
b_{1},\ldots ,b_{q}%
\end{array}%
\right. \right) =G_{p,q}^{m,n}\left( z\left\vert 
\begin{array}{c}
a_{1}+\alpha ,\ldots ,a_{p}+\alpha \\ 
b_{1}+\alpha ,\ldots ,b_{q}+\alpha%
\end{array}%
\right. \right) .  \label{Meijer_translation}
\end{equation}

Also, the generalized hypergeometric function $_{p}F_{q}$\ can be expressed
in terms of the Meijer-$G$ function as follows \cite[Eqn. 8.4.51(1)]%
{Prudnikov3}: 
\begin{eqnarray}
&&_{p}F_{q}\left( \left. 
\begin{array}{c}
a_{1},\ldots ,a_{p} \\ 
b_{1},\ldots ,b_{q}%
\end{array}%
\right\vert -x\right)  \label{Meijer->p_F_q} \\
&=&\frac{\prod_{\ell =1}^{q}\Gamma \left( b_{\ell }\right) }{\prod_{\ell
=1}^{p}\Gamma \left( a_{\ell }\right) }\, G_{p,q+1}^{1,p}\left( x\left\vert 
\begin{array}{c}
1-a_{1},\ldots ,1-a_{p} \\ 
0,1-b_{1},\ldots ,1-b_{q}%
\end{array}%
\right. \right) .  \notag
\end{eqnarray}

Finally, for the asymptotic behavior of the Meijer-$G$ function (\ref%
{Meijer_G_def}), we introduce the following notation:%
\begin{eqnarray*}
\mu &=&q-m-n,\quad \sigma =q-p, \\
\Xi _{1} &=&\sum_{h=1}^{q}b_{h},\quad \Lambda _{1}=\sum_{h=1}^{p}a_{h}, \\
\theta &=&\frac{\left( 1-\sigma \right) /2+\Xi _{1}-\Lambda _{1}}{\sigma },
\\
A_{\quad q}^{m,n} &=&\left( -\frac{1}{2\pi i}\right) ^{\mu }\exp \left( i\pi %
\left[ \sum_{j=1}^{n}a_{j}-\sum_{j=m+1}^{q}b_{j}\right] \right) , \\
H_{p,q}\left( z\right) &=&\frac{\left( 2\pi \right) ^{\left( \sigma
-1\right) /2}}{\sigma ^{1/2}}\exp \left( -\sigma \,z^{1/\sigma }\right)
z^{\theta }\sum_{k=0}^{\infty }M_{k}\,z^{-k/\sigma },
\end{eqnarray*}%
where the first coefficient in the last expansion is $M_{0}=1$. Thereby,
according to \cite[Eqn. 5.10(8)]{Luke}, the following result is satisfied:

\begin{theorem}
If $0\leq n\leq p\leq q-2$, $p+1\leq m+n\leq \left( p+q\right) /2$, and $%
\mathrm{arg}\ z=0$, then%
\begin{equation}
G_{p,q}^{m,n}\left( z\right) \sim A_{\quad q}^{m,n}H_{p,q}\left( ze^{i\pi
\mu }\right) +\bar{A}_{\quad q}^{m,n}H_{p,q}\left( ze^{-i\pi \mu }\right)
,\quad z\rightarrow \infty .  \label{Meijer_G_z->inf}
\end{equation}
\end{theorem}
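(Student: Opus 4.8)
The expansion (\ref{Meijer_G_z->inf}) is the classical exponentially-large asymptotics of a Meijer-$G$ function, so the quickest route is simply to quote \cite[Eqn. 5.10(8)]{Luke}. To give a self-contained argument I would proceed by the method of steepest descents applied to the Mellin--Barnes representation (\ref{Meijer_G_def}). First I would insert Stirling's formula for every gamma factor in the numerator and the denominator of the integrand; since the hypothesis $p\leq q-2$ forces $\sigma =q-p\geq 2$, the dominant part of $\log $(integrand) along the relevant directions is $-\sigma \,s\log s+s\log z$ plus lower-order terms, so the integrand takes the shape $e^{\Phi (s,z)}$ with saddle points characterised by $\partial _{s}\Phi =0$, i.e. $s_{\ast }^{\,\sigma }\asymp z$ and hence $s_{\ast }\asymp z^{1/\sigma }$.

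Next I would identify, for $\mathrm{arg}\,z=0$, the complex-conjugate pair of saddles that can be connected to the Barnes contour $L$ by an admissible deformation, and evaluate the Laplace integral through each of them. The leading Gaussian contribution should reproduce the prefactor $(2\pi )^{(\sigma -1)/2}\sigma ^{-1/2}$, the exponential $\exp (-\sigma z^{1/\sigma })$ and the power $z^{\theta }$ that together make up $H_{p,q}$, while the higher-order terms of the expansion at the saddle should generate the series $\sum_{k\geq 0}M_{k}\,z^{-k/\sigma }$ with $M_{0}=1$. The branch shifts $z\mapsto ze^{\pm i\pi \mu }$ and the phase $\exp \!\big( i\pi \big[ \sum_{j=1}^{n}a_{j}-\sum_{j=m+1}^{q}b_{j}\big] \big)$ picked up at the two saddles should then assemble into $A_{\quad q}^{m,n}$ and its conjugate $\bar{A}_{\quad q}^{m,n}$, producing the two-term form on the right of (\ref{Meijer_G_z->inf}).

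Finally I would check that the remaining restrictions $0\leq n\leq p$ and $p+1\leq m+n\leq (p+q)/2$ are exactly those guaranteeing that the Barnes integral (\ref{Meijer_G_def}) converges, that its contour can be pushed onto the steepest-descent paths through the conjugate saddles, and that this contribution dominates both the residue (algebraic) series of $G_{p,q}^{m,n}$ and any other exponential contribution, so that the two saddles already account for the full leading behaviour. The hard part will be the rigorous steepest-descent estimate: uniform control of the Stirling error over the deformed contour, justification of the deformation itself, and the careful bookkeeping of $\theta $, $A_{\quad q}^{m,n}$ and the coefficients $M_{k}$. Since all of this is carried out in detail in \cite[Sect. 5.10]{Luke}, in practice I would simply cite it rather than reproduce the computation.
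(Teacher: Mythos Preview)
The paper does not prove this statement at all: it is introduced with ``according to \cite[Eqn.~5.10(8)]{Luke}, the following result is satisfied'' and then simply stated, with no proof or proof environment. Your proposal therefore matches the paper in its bottom line---cite Luke---and in fact goes well beyond it by sketching the steepest-descent mechanism behind the result; the paper offers none of that.
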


\subsection{Order derivatives of Bessel functions}

\begin{theorem}
$\forall \nu \in 
%TCIMACRO{\U{211d} }%
%BeginExpansion
\mathbb{R}
%EndExpansion
$ and $\mathrm{Re\,}z>0$, the following integral holds true: 
\begin{equation}
\int_{z}^{\infty }\frac{J_{\nu }\left( t\right) Y_{\nu }\left( t\right) }{t}%
dt=\frac{-1}{2\sqrt{\pi }}\, G_{2,4}^{3,0}\left( z^{2}\left\vert 
\begin{array}{c}
1/2,1 \\ 
0,0,\nu ,-\nu%
\end{array}%
\right. \right) .  \label{Int_Jnu_Ynu_Meijer}
\end{equation}
\end{theorem}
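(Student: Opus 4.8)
The plan is to sidestep the hypergeometric closed form of Section~\ref{Section: Bessel functions} and instead integrate a Meijer-$G$ representation of the product $J_{\nu }Y_{\nu }$ directly by means of the derivative formula for the Meijer-$G$ function. The starting point is the identity
\[
J_{\nu }\left( z\right) Y_{\nu }\left( z\right) =\frac{-1}{\sqrt{\pi }}\,G_{1,3}^{2,0}\left( z^{2}\left\vert
\begin{array}{c}
1/2 \\
0,\nu ,-\nu
\end{array}
\right. \right) ,
\]
which follows from the Mellin--Barnes integral (\ref{Meijer_G_def}), the normalisation constant being fixed, e.g., by matching the behaviour $J_{\nu }\left( z\right) Y_{\nu }\left( z\right) \sim -1/\left( \pi \nu \right) $ as $z\rightarrow 0$; equivalently it can be derived from the known $G$-representation of $J_{\mu }J_{\nu }$ together with the definition (\ref{Ynu_def}) of $Y_{\nu }$, and, crucially, it holds for every $\nu \in \mathbb{R}$, the case $\nu \in \mathbb{Z}$ producing the logarithmic terms carried by $Y_{n}$. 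Applying the reduction formula (\ref{Meijer_reduction_ap}) with the coincident parameter $a_{2}=b_{1}=0$ recasts this as
\[
J_{\nu }\left( t\right) Y_{\nu }\left( t\right) =\frac{-1}{\sqrt{\pi }}\,G_{2,4}^{3,0}\left( t^{2}\left\vert
\begin{array}{c}
1/2,0 \\
0,0,\nu ,-\nu
\end{array}
\right. \right) .
\]

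Next I would exhibit an antiderivative. By the derivative formula (\ref{D_Meijer_ap}) applied to $G_{2,4}^{3,0}$ with upper parameters $1/2,1$ — so that the prefactor $w^{1-a_{p}}$ in (\ref{D_Meijer_ap}) is $w^{0}=1$ — one gets $\frac{d}{dw}\,G_{2,4}^{3,0}(w\,\vert\,{1/2,1\atop 0,0,\nu ,-\nu }) =-w^{-1}\,G_{2,4}^{3,0}(w\,\vert\,{1/2,0\atop 0,0,\nu ,-\nu }) $, whence, with $w=t^{2}$,
\[
\frac{1}{t}\,G_{2,4}^{3,0}\left( t^{2}\left\vert
\begin{array}{c}
1/2,0 \\
0,0,\nu ,-\nu
\end{array}
\right. \right) =-\frac{1}{2}\,\frac{d}{dt}\,G_{2,4}^{3,0}\left( t^{2}\left\vert
\begin{array}{c}
1/2,1 \\
0,0,\nu ,-\nu
\end{array}
\right. \right) .
\]
Integrating $J_{\nu }\left( t\right) Y_{\nu }\left( t\right) /t$ from $z$ to $\infty $ therefore reduces to a boundary term, and the contribution at $t=\infty $ vanishes: the hypotheses of the asymptotic expansion (\ref{Meijer_G_z->inf}) are met here ($m=3$, $n=0$, $p=2$, $q=4$, so $0\leq n\leq p\leq q-2$ and $p+1\leq m+n\leq \left( p+q\right) /2$), and the exponent $\theta $ in that expansion equals $-1$ since $\sigma =2$, $\Xi _{1}=0$, $\Lambda _{1}=3/2$; hence $G_{2,4}^{3,0}(t^{2}\,\vert\,\cdots) =O(t^{-2}) $ as $t\rightarrow \infty $, which both kills the boundary term and shows the integral converges for $\mathrm{Re}\,z>0$. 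Collecting the pieces,
\[
\int_{z}^{\infty }\frac{J_{\nu }\left( t\right) Y_{\nu }\left( t\right) }{t}dt=\frac{1}{2\sqrt{\pi }}\left[ G_{2,4}^{3,0}\left( t^{2}\left\vert
\begin{array}{c}
1/2,1 \\
0,0,\nu ,-\nu
\end{array}
\right. \right) \right] _{t=z}^{t=\infty }=\frac{-1}{2\sqrt{\pi }}\,G_{2,4}^{3,0}\left( z^{2}\left\vert
\begin{array}{c}
1/2,1 \\
0,0,\nu ,-\nu
\end{array}
\right. \right) ,
\]
which is precisely (\ref{Int_Jnu_Ynu_Meijer}).

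The main obstacle is the very first step: pinning down the Meijer-$G$ form of $J_{\nu }Y_{\nu }$ with the correct constant and confirming that it persists at integer $\nu $. Everything afterwards is formal bookkeeping with the reduction, differentiation and asymptotic formulas already recorded in this section, and since none of those steps excludes $\nu \in \mathbb{Z}$, the resulting identity is valid for all $\nu \in \mathbb{R}$ — which is exactly the gain over (\ref{Int_Jnu_Ynu}). A less economical alternative would be to start from the closed form (\ref{Int_Jnu_Ynu}), convert each ${}_{2}F_{3}$ and ${}_{3}F_{4}$ via (\ref{Meijer->p_F_q}), express the remaining elementary terms through their own Meijer-$G$ representations, and fuse all the pieces into a single $G_{2,4}^{3,0}$ by contiguity identities, finishing by analytic continuation in $\nu $; I would keep the route above.
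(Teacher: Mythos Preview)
Your argument is correct and follows essentially the same route as the paper: start from the Meijer-$G$ representation of $J_{\nu}Y_{\nu}$, use the reduction formula (\ref{Meijer_reduction_ap}) in reverse to insert the coincident pair $a_{2}=b_{1}=0$, apply the derivative rule (\ref{D_Meijer_ap}) with $a_{p}=1$ to obtain the antiderivative, and kill the boundary term at infinity via (\ref{Meijer_G_z->inf}). The paper cites the $G$-representation of $J_{\nu}Y_{\nu}$ directly from \cite{Prudnikov3} rather than deriving it, and it displays the explicit asymptotic constant $\frac{\sin\pi\nu}{\sqrt{\pi}\,z^{2}}e^{-2iz}$ instead of just the order $O(t^{-2})$, but the logical structure is the same.
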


\begin{proof}
According to the representation \cite[Eqn. 8.4.20(9)]{Prudnikov3}%
\begin{equation*}
J_{\nu }\left( \sqrt{x}\right) Y_{\nu }\left( \sqrt{x}\right) =-\frac{1}{%
\sqrt{\pi }}\, G_{1,3}^{2,0}\left( x\left\vert 
\begin{array}{c}
1/2 \\ 
0,\nu ,-\nu%
\end{array}%
\right. \right) ,
\end{equation*}%
we have the following indefinite integral,%
\begin{equation*}
\int \frac{J_{\nu }\left( t\right) Y_{\nu }\left( t\right) }{t}dt=-\frac{1}{%
\sqrt{\pi }}\int G_{1,3}^{2,0}\left( t^{2}\left\vert 
\begin{array}{c}
1/2 \\ 
0,\nu ,-\nu%
\end{array}%
\right. \right) \frac{dt}{t}.
\end{equation*}%
Performing the change of variables $u=t^{2}$ and applying the reduction
formula (\ref{Meijer_reduction_ap}), we obtain%
\begin{equation*}
\int \frac{J_{\nu }\left( t\right) Y_{\nu }\left( t\right) }{t}dt=-\frac{1}{2%
\sqrt{\pi }}\int G_{2,4}^{3,0}\left( u\left\vert 
\begin{array}{c}
1/2,0 \\ 
0,0,\nu ,-\nu%
\end{array}%
\right. \right) \frac{du}{u}.
\end{equation*}%
Taking now $a_{p}=1$ in the derivative formula (\ref{D_Meijer_ap}), we
arrive at%
\begin{equation*}
\int \frac{J_{\nu }\left( t\right) Y_{\nu }\left( t\right) }{t}dt=\frac{1}{2%
\sqrt{\pi }}\, G_{2,4}^{3,0}\left( t^{2}\left\vert 
\begin{array}{c}
1/2,1 \\ 
0,0,\nu ,-\nu%
\end{array}%
\right. \right) .
\end{equation*}%
Since, according to (\ref{Meijer_G_z->inf}), 
\begin{equation*}
\lim_{z\rightarrow \infty }\, G_{2,4}^{3,0}\left( z^{2}\left\vert 
\begin{array}{c}
1/2,1 \\ 
0,0,\nu ,-\nu%
\end{array}%
\right. \right) =\lim_{z\rightarrow \infty }\, \frac{\sin \pi \nu }{\sqrt{\pi }%
z^{2}}e^{-2iz}=0,
\end{equation*}%
we conclude (\ref{Int_Jnu_Ynu_Meijer}), as we wanted to prove.
\end{proof}

\begin{theorem}
$\forall \nu >0$ and $\mathrm{Re\,}z>0$, the following integrals holds true:%
\begin{eqnarray}
&&\int_{z}^{\infty }\frac{Y_{\nu }^{2}\left( t\right) }{t}dt
\label{Int_Y2_Meijer} \\
&=&\frac{1}{2\nu }+\frac{1}{\sqrt{\pi }}\, G_{3,5}^{4,0}\left( z^{2}\left\vert 
\begin{array}{c}
1/2,1/2-\nu ,1 \\ 
0,0,\nu ,-\nu ,1/2-\nu%
\end{array}%
\right. \right)  \notag \\
&&-\frac{\left( z/2\right) ^{2\nu }}{2\nu\, \Gamma ^{2}\left( \nu +1\right) }%
\,_{2}F_{3}\left( \left. 
\begin{array}{c}
\nu ,1/2+\nu \\ 
2\nu +1,\nu +1,\nu +1%
\end{array}%
\right\vert -z^{2}\right) .  \notag
\end{eqnarray}
\end{theorem}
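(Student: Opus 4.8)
The plan is to follow the template of the proof of (\ref{Int_Jnu_Ynu_Meijer}). First I would split the integrand as $Y_\nu^2(t)/t=\bigl(Y_\nu^2(t)-J_\nu^2(t)\bigr)/t+J_\nu^2(t)/t$: unlike $J_\nu Y_\nu$, whose integral against $dt/t$ is a single Meijer-$G$, the square $Y_\nu^2$ splits through (\ref{Ynu_def}) into $\cot^2\pi\nu\,J_\nu^2$, $\csc^2\pi\nu\,J_{-\nu}^2$ and $-2\cot\pi\nu\csc\pi\nu\,J_\nu J_{-\nu}$, so that only after removing the $J_\nu^2$ term does the rest integrate to a single $G$. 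For $\int_z^\infty J_\nu^2(t)/t\,dt$ I would use (\ref{Int_Jnu^2b}) together with the classical value $\int_0^\infty J_\nu^2(t)/t\,dt=1/(2\nu)$ (the constant already appearing in (\ref{DJnu_int_Dunster})) to write it as $1/(2\nu)-\dfrac{(z/2)^{2\nu}}{2\nu\,\Gamma^2(\nu+1)}\,{}_2F_3$; this is exactly the constant and the hypergeometric term on the right of (\ref{Int_Y2_Meijer}), whose lower parameters $2\nu+1,\nu+1,\nu+1$ are a reordering of those in (\ref{Int_Jnu^2b}).

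For $\int_z^\infty\bigl(Y_\nu^2(t)-J_\nu^2(t)\bigr)/t\,dt$ I would obtain a Meijer-$G$ representation of $Y_\nu^2(\sqrt x)-J_\nu^2(\sqrt x)$, either directly from Prudnikov's tables (the companion of the identity $J_\nu(\sqrt x)Y_\nu(\sqrt x)=-\tfrac1{\sqrt\pi}\,G_{1,3}^{2,0}\bigl(x\,|\,1/2;0,\nu,-\nu\bigr)$ used above) or by assembling it from the $G_{1,3}^{1,0}$-representations of $J_\nu^2$, $J_{-\nu}^2$ and $J_\nu J_{-\nu}$ together with (\ref{Gamma_reflection}) and (\ref{Gamma_duplication}). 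The mechanical part then mirrors the earlier proof: pass to the indefinite integral, substitute $u=t^2$ so $dt/t=du/(2u)$, apply the reduction formula (\ref{Meijer_reduction_ap}) --- here, I expect, twice: once to accommodate the parameter $1/2-\nu$ that occurs doubled, in the denominator slots of the upper and lower lists, in the $G_{3,5}^{4,0}$ of (\ref{Int_Y2_Meijer}), and once (as before) to manufacture a lower parameter equal to $a_p$ --- then antidifferentiate with (\ref{D_Meijer_ap}) at $a_p=1$, and finally normalize the argument and parameters with the translation formula (\ref{Meijer_translation}). This should produce, up to sign, $\tfrac1{\sqrt\pi}$ times the $G_{3,5}^{4,0}$ of (\ref{Int_Y2_Meijer}), evaluated at $t^2$, as the antiderivative.

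To evaluate the improper integral I would check that this $G_{3,5}^{4,0}$ meets the hypotheses of Theorem (\ref{Meijer_G_z->inf}) and that its leading large-argument behavior is, exactly as for the $G_{2,4}^{3,0}$ of the previous theorem (whose dominant term was $\propto z^{-2}e^{\pm2iz}$), $O(z^{-1})$ times an oscillatory exponential, hence tends to $0$ as $t\to\infty$. Then $\int_z^\infty\bigl(Y_\nu^2-J_\nu^2\bigr)/t\,dt$ is $\tfrac1{\sqrt\pi}$ times that $G_{3,5}^{4,0}$ at $z^2$, and adding the $J_\nu^2$-piece above gives (\ref{Int_Y2_Meijer}). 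Every term there is analytic in $\nu$ for $\nu>0$ (the $G$-function is entire in its parameters, and the ${}_2F_3$ with the $\Gamma$'s is analytic for $\nu>0$), so the restriction $\nu\notin\mathbb{Z}$ used in the intermediate steps can be dropped and the formula holds for all $\nu>0$, integers included; alternatively one could start from (\ref{Int_Y2}) and collapse its three ${}_pF_q$ terms into the single $G_{3,5}^{4,0}$ via (\ref{Meijer->p_F_q}), again extending to integer $\nu$ by continuity.

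The main obstacle is the beginning of the second paragraph: pinning down the Meijer-$G$ representation of $Y_\nu^2-J_\nu^2$ and then carrying the parameters through (\ref{Meijer_reduction_ap}) and (\ref{D_Meijer_ap}) so that the doubled $1/2-\nu$ ends up in the denominator slots of both parameter lists of $G_{3,5}^{4,0}$; this is where a particular tabulated identity (or a careful derivation from (\ref{Ynu_def})) is indispensable and where the ordering and sign conventions of the $G$-function manipulations demand the most care.
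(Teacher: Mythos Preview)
Your approach coincides with the paper's: the paper quotes the Prudnikov representation
\[
Y_\nu^2(\sqrt{x}) = \tfrac{2}{\sqrt{\pi}}\,G_{2,4}^{3,0}\!\left(x\,\Big|\,\begin{smallmatrix}1/2,\,1/2-\nu\\0,\,\nu,\,-\nu,\,1/2-\nu\end{smallmatrix}\right) + \tfrac{1}{\sqrt{\pi}}\,G_{1,3}^{1,1}\!\left(x\,\Big|\,\begin{smallmatrix}1/2\\\nu,\,-\nu,\,0\end{smallmatrix}\right),
\]
and the second summand is precisely $J_\nu^2(\sqrt{x})$, so this is exactly your split $(Y_\nu^2-J_\nu^2)+J_\nu^2$, with the first summand being the Meijer-$G$ you are after for the difference. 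One small correction: the doubled parameter $1/2-\nu$ is already present in that $G_{2,4}^{3,0}$ (as $a_2$ and as $b_4$, both in denominator slots since $n=0$, $m=3$), so the reduction (\ref{Meijer_reduction_ap}) is applied only once---to insert the matching pair $a_3=b_1=0$ that sets up (\ref{D_Meijer_ap}) with $a_p=1$---not twice; your handling of the $J_\nu^2$ piece via (\ref{Int_Jnu^2b}) and $\int_0^\infty J_\nu^2(t)\,t^{-1}dt=1/(2\nu)$ is in fact a shade more direct than the paper's route through (\ref{Meijer_reduction_a1}), (\ref{D_Meijer_a1}) and (\ref{Meijer->p_F_q}).
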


\begin{proof}
According to the representation \cite[Eqn. 8.4.20(7)]{Prudnikov3}%
\begin{eqnarray*}
Y_{\nu }^{2}\left( \sqrt{x}\right) &=&\frac{2}{\sqrt{\pi }}\, %
G_{2,4}^{3,0}\left( x\left\vert 
\begin{array}{c}
1/2,1/2-\nu \\ 
0,\nu ,-\nu ,1/2-\nu%
\end{array}%
\right. \right) \\
&&+\frac{1}{\sqrt{\pi }}\, G_{1,3}^{1,1}\left( x\left\vert 
\begin{array}{c}
1/2 \\ 
\nu ,-\nu ,0%
\end{array}%
\right. \right) ,
\end{eqnarray*}%
we have the following indefinite integral,%
\begin{eqnarray}
&&\int \frac{Y_{\nu }^{2}\left( t\right) }{t}dt  \label{Int_Ynu^2_Meijer_1}
\\
&=&\frac{1}{\sqrt{\pi }}\int G_{2,4}^{3,0}\left( u\left\vert 
\begin{array}{c}
1/2,1/2-\nu \\ 
0,\nu ,-\nu ,1/2-\nu%
\end{array}%
\right. \right) \frac{du}{u}  \notag \\
&&+\frac{1}{2\sqrt{\pi }}\int G_{1,3}^{1,1}\left( u\left\vert 
\begin{array}{c}
1/2 \\ 
\nu ,-\nu ,0%
\end{array}%
\right. \right) \frac{du}{u},  \notag
\end{eqnarray}%
where we have performed the change of variables $u=t^{2}$. The first
integral on the RHS\ of (\ref{Int_Ynu^2_Meijer_1})\ is calculated using the
reduction formula (\ref{Meijer_reduction_ap}),\ and then applying the derivative
formula (\ref{D_Meijer_ap}) with $a_{p}=1$, 
\begin{eqnarray}
&&\frac{1}{\sqrt{\pi }}\int G_{2,4}^{3,0}\left( u\left\vert 
\begin{array}{c}
1/2,1/2-\nu \\ 
0,\nu ,-\nu ,1/2-\nu%
\end{array}%
\right. \right) \frac{du}{u}  \label{Int_Meijer_1} \\
&=&\frac{-1}{\sqrt{\pi }}\, G_{3,5}^{4,0}\left( t^{2}\left\vert 
\begin{array}{c}
1/2,1/2-\nu ,1 \\ 
0,0,\nu ,-\nu ,1/2-\nu%
\end{array}%
\right. \right) .  \notag
\end{eqnarray}%
The second integral on the RHS\ of (\ref{Int_Ynu^2_Meijer_1})\ is calculated
using the reduction formula (\ref{Meijer_reduction_a1}),\ and then applying the
derivative formula (\ref{D_Meijer_a1}) with $a_{1}=1$, 
\begin{equation}
\frac{1}{2\sqrt{\pi }}\int G_{1,3}^{1,1}\left( u\left\vert 
\begin{array}{c}
1/2 \\ 
\nu ,-\nu ,0%
\end{array}%
\right. \right) \frac{du}{u}=\frac{1}{2\sqrt{\pi }}\, G_{2,4}^{1,2}\left(
t^{2}\left\vert 
\begin{array}{c}
1,1/2 \\ 
\nu ,-\nu ,0,0%
\end{array}%
\right. \right) .  \label{Meijer->Hypergeometric}
\end{equation}%
Notice that applying the translation formula (\ref{Meijer_translation}) and
then the reduction formula (\ref{Meijer->p_F_q}), we can express the RHS\ of
(\ref{Meijer->Hypergeometric})\ as a hypergeometric function, 
\begin{eqnarray}
&&\frac{1}{2\sqrt{\pi }}\int G_{1,3}^{1,1}\left( u\left\vert 
\begin{array}{c}
1/2 \\ 
\nu ,-\nu ,0%
\end{array}%
\right. \right) \frac{du}{u}  \label{Int_Meijer_2} \\
&=&\frac{t^{2\nu }}{2\sqrt{\pi }}\, G_{2,4}^{1,2}\left( t^{2}\left\vert 
\begin{array}{c}
1-\nu ,1/2-\nu \\ 
0,-2\nu ,-\nu ,-\nu%
\end{array}%
\right. \right)  \notag \\
&=&\frac{\left( t/2\right) ^{2\nu }}{2\,\Gamma ^{2}\left( \nu +1\right) }%
\,_{2}F_{3}\left( \left. 
\begin{array}{c}
\nu ,1/2+\nu \\ 
2\nu +1,\nu +1,\nu +1%
\end{array}%
\right\vert -t^{2}\right) ,  \notag
\end{eqnarray}%
where we have applied the duplication formula of the gamma function (\ref%
{Gamma_duplication}). Therefore, substituting the results (\ref{Int_Meijer_1}%
)\ and (\ref{Int_Meijer_2})\ in (\ref{Int_Ynu^2_Meijer_1}), we obtain%
\begin{eqnarray}
&&\int \frac{Y_{\nu }^{2}\left( t\right) }{t}dt
\label{Int_Y2_Meijer_indefinida} \\
&=&\frac{-1}{\sqrt{\pi }}\, G_{3,5}^{4,0}\left( t^{2}\left\vert 
\begin{array}{c}
1/2,1/2-\nu ,1 \\ 
0,0,\nu ,-\nu ,1/2-\nu%
\end{array}%
\right. \right)  \notag \\
&&+\frac{\left( t/2\right) ^{2\nu }}{2\,\Gamma ^{2}\left( \nu +1\right) }%
\,_{2}F_{3}\left( \left. 
\begin{array}{c}
\nu ,1/2+\nu \\ 
2\nu +1,\nu +1,\nu +1%
\end{array}%
\right\vert -t^{2}\right)  \notag
\end{eqnarray}%
Finally, notice that the hypergeometric function obtained in (\ref%
{Int_Meijer_2})\ is the integral calculated in (\ref{Int_I2}), thus%
\begin{eqnarray}
&&\lim_{z\rightarrow \infty }\, \frac{\left( z/2\right) ^{2\nu }}{2\,\Gamma
^{2}\left( \nu +1\right) }\,_{2}F_{3}\left( \left. 
\begin{array}{c}
\nu ,1/2+\nu \\ 
2\nu +1,\nu +1,\nu +1%
\end{array}%
\right\vert -z^{2}\right)  \label{Lim_Meijer_1} \\
&=&\int_{0}^{\infty }\frac{J_{\nu }^{2}\left( t\right) }{t}dt=\frac{1}{2\nu }%
,  \notag
\end{eqnarray}%
where we have applied \cite[Eqn. 13.42(1)]{Watson}. Also, according to (\ref%
{Meijer_G_z->inf}), we have that 
\begin{equation}
\lim_{z\rightarrow \infty }\, G_{3,5}^{4,0}\left( z^{2}\left\vert 
\begin{array}{c}
1/2,1/2-\nu ,1 \\ 
0,0,\nu ,-\nu ,1/2-\nu%
\end{array}%
\right. \right) =\lim_{z\rightarrow \infty }\, \frac{-\cos \pi \nu }{\sqrt{\pi }%
z^{2}}e^{-2iz}=0.  \label{Lim_Meijer_2}
\end{equation}%
Therefore, from (\ref{Int_Y2_Meijer_indefinida}), (\ref{Lim_Meijer_1})\ and (%
\ref{Lim_Meijer_2}), we conclude (\ref{Int_Y2_Meijer}), as we wanted to
prove.
\end{proof}

According to the integral representation given in (\ref{DJnu_int_Dunster}),
and the results obtained in (\ref{Int_Jnu^2b})\ and (\ref{Int_Jnu_Ynu_Meijer}%
), we calculate the order derivative of the Bessel function of the first
kind as,%
\begin{eqnarray}
\frac{\partial J_{\nu }\left( z\right) }{\partial \nu } &=&\frac{\pi }{2}%
\left[ Y_{\nu }\left( z\right) \frac{\left( z/2\right) ^{2\nu }}{\,\Gamma
^{2}\left( \nu +1\right) }\,_{2}F_{3}\left( \left. 
\begin{array}{c}
\nu ,1/2+\nu \\ 
2\nu +1,\nu +1,\nu +1%
\end{array}%
\right\vert -z^{2}\right) \right.  \label{DJnu_Meijer} \\
&&-\left. \frac{\nu J_{\nu }\left( z\right) }{\sqrt{\pi }}\, %
G_{2,4}^{3,0}\left( z^{2}\left\vert 
\begin{array}{c}
1/2,1 \\ 
0,0,\nu ,-\nu%
\end{array}%
\right. \right) \right] ,\quad \nu >0,\,\mathrm{Re\,}z>0.  \notag
\end{eqnarray}

As by-product, from (\ref{DJnu_int_Apelblat}) and (\ref{DJnu_Meijer}), we
obtain the calculation of the following integral, which does not seem to be
reported in the literature,%
\begin{eqnarray}
&&\int_{0}^{\pi /2}\tan \theta \ Y_{0}\left( z\sin ^{2}\theta \right) J_{\nu
}\left( z\cos ^{2}\theta \right) d\theta  \label{Int_Jnu_Appleblat_resultado}
\\
&=&\frac{Y_{\nu }\left( z\right) \left( t/2\right) ^{2\nu }}{\,2\nu\, \Gamma
^{2}\left( \nu +1\right) }\,_{2}F_{3}\left( \left. 
\begin{array}{c}
\nu ,1/2+\nu \\ 
2\nu +1,\nu +1,\nu +1%
\end{array}%
\right\vert -t^{2}\right)  \notag \\
&&-\frac{J_{\nu }\left( z\right) }{2\sqrt{\pi }}\, G_{2,4}^{3,0}\left(
z^{2}\left\vert 
\begin{array}{c}
1/2,1 \\ 
0,0,\nu ,-\nu%
\end{array}%
\right. \right) ,\quad \,\nu >0.  \notag
\end{eqnarray}

Also, according to the integral representation given in (\ref%
{DYnu_int_Dunster}), and the results obtained in (\ref{Int_Jnu_Ynu_Meijer})
and (\ref{Int_Y2_Meijer}), the order derivative of the Bessel function of
the second kind is%
\begin{eqnarray}
\frac{\partial Y_{\nu }\left( z\right) }{\partial \nu } &=&J_{\nu }\left(
z\right) \left[ \sqrt{\pi }\nu\ G_{3,5}^{4,0}\left( z^{2}\left\vert 
\begin{array}{c}
1/2,1/2-\nu ,1 \\ 
0,0,\nu ,-\nu ,1/2-\nu%
\end{array}%
\right. \right) \right.  \label{DYnu_Meijer} \\
&&\left. -\frac{\pi \left( z/2\right) ^{2\nu }}{2\, \Gamma ^{2}\left( \nu
+1\right) }\,_{2}F_{3}\left( \left. 
\begin{array}{c}
\nu ,1/2+\nu \\ 
2\nu +1,\nu +1,\nu +1%
\end{array}%
\right\vert -z^{2}\right) \right]  \notag \\
&&+\frac{\sqrt{\pi }\nu\ Y_{\nu }\left( z\right) } {2}\, G_{2,4}^{3,0}\left(
z^{2}\left\vert 
\begin{array}{c}
1/2,1 \\ 
0,0,\nu ,-\nu%
\end{array}%
\right. \right) ,\quad \nu >0,\,\mathrm{Re\,}z>0.  \notag
\end{eqnarray}

\subsection{Order derivatives of modified Bessel functions}

\begin{theorem}
$\forall \nu >0$ and $\mathrm{Re\,}z>0$, the following integral holds true:\ 
\begin{equation}
\int_{z}^{\infty }\frac{I_{\nu }\left( t\right) K_{\nu }\left( t\right) }{t}%
dt=\frac{1}{4\sqrt{\pi }}\, G_{2,4}^{3,1}\left( z^{2}\left\vert 
\begin{array}{c}
1/2,1 \\ 
0,0,\nu ,-\nu%
\end{array}%
\right. \right) .  \label{Int_IK_Meijer}
\end{equation}
\end{theorem}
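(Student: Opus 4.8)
The plan is to follow step by step the proof of (\ref{Int_Jnu_Ynu_Meijer}), with $J_{\nu }Y_{\nu }$ replaced by $I_{\nu }K_{\nu }$. The starting point is a single Meijer-$G$ representation of the product $I_{\nu }(\sqrt{x})K_{\nu }(\sqrt{x})$ taken from \cite[Sect. 8.4.23]{Prudnikov3}; following the same pattern as the representation of $J_{\nu }(\sqrt{x})Y_{\nu }(\sqrt{x})$ used above, it reads
\begin{equation*}
I_{\nu }(\sqrt{x})\,K_{\nu }(\sqrt{x})=\frac{1}{2\sqrt{\pi }}\,G_{1,3}^{2,1}\left( x\left\vert
\begin{array}{c}
1/2 \\
0,\nu ,-\nu
\end{array}
\right. \right) ,
\end{equation*}
the only differences from the $J_{\nu }Y_{\nu }$ case being the upper index $n=1$ instead of $n=0$ — which is exactly what eventually produces a $G_{2,4}^{3,1}$ rather than a $G_{2,4}^{3,0}$ — and the normalizing constant; a quick consistency check is the large-argument behavior $I_{\nu }(\sqrt{x})K_{\nu }(\sqrt{x})\sim \tfrac12 x^{-1/2}$ together with the absence of any exponentially growing contribution. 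Hence
\begin{equation*}
\int \frac{I_{\nu }(t)K_{\nu }(t)}{t}\,dt=\frac{1}{2\sqrt{\pi }}\int G_{1,3}^{2,1}\left( t^{2}\left\vert
\begin{array}{c}
1/2 \\
0,\nu ,-\nu
\end{array}
\right. \right) \frac{dt}{t}.
\end{equation*}

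I would then carry out the two formal manipulations already used twice in Section \ref{Section: Meijer_G}. The change of variables $u=t^{2}$ combined with the reduction formula (\ref{Meijer_reduction_ap}), read from right to left with the repeated parameter equal to $0$, turns the integrand into $G_{2,4}^{3,1}$ with numerator parameters $1/2,0$ and denominator parameters $0,0,\nu ,-\nu $; then the derivative formula (\ref{D_Meijer_ap}) with $a_{p}=1$ — admissible here since $n=1=p-1$ — yields
\begin{equation*}
\int \frac{I_{\nu }(t)K_{\nu }(t)}{t}\,dt=-\frac{1}{4\sqrt{\pi }}\,G_{2,4}^{3,1}\left( t^{2}\left\vert
\begin{array}{c}
1/2,1 \\
0,0,\nu ,-\nu
\end{array}
\right. \right) +C.
\end{equation*}
To pass to the definite integral over $[z,\infty )$ it then suffices to show that the Meijer-$G$ function appearing here tends to $0$ as $t\rightarrow \infty $; this also forces $C=0$, because (\ref{Inu_z->inf})--(\ref{Knu_z->inf}) give an $O(t^{-2})$ integrand, so that $\int_{z}^{\infty }I_{\nu }(t)K_{\nu }(t)\,t^{-1}dt$ converges and vanishes as $z\rightarrow \infty $.

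The main obstacle is exactly this last limit. Unlike the $G^{m,0}$ functions that appeared in the proofs of (\ref{Int_Jnu_Ynu_Meijer}) and (\ref{Int_Y2_Meijer}), the function $G_{2,4}^{3,1}$ lies outside the parameter range covered by the asymptotic theorem (\ref{Meijer_G_z->inf}) (one has $m+n=4>(p+q)/2=3$), so a separate argument is needed. The cleanest one exploits the fact, noted above, that $I_{\nu }(\sqrt{x})K_{\nu }(\sqrt{x})$ has no exponentially large part at infinity: the same must then hold for its Meijer-$G$ antiderivative, whose purely algebraic asymptotic series — controlled by the poles of the factor $\Gamma (1/2+s)$ in the Mellin--Barnes integrand, the first of which sits at $s=-1/2$ — gives a decay of order $O(t^{-1})$ and hence the vanishing of the limit. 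An alternative that avoids invoking Luke's more general asymptotic expansions is to pin down $C$ by comparison with the closed form (\ref{Int_Inu_Knu}) already proved: expanding the Meijer-$G$ in (\ref{Int_IK_Meijer}) by residues — the double poles at the nonnegative integers reproducing the $\log (2/z)$ and ${}_{3}F_{4}$ terms, the simple poles at $s=\nu +k$ reproducing the $(z/2)^{2\nu }\,{}_{2}F_{3}$ term — and matching with the right-hand side of (\ref{Int_Inu_Knu}) gives $C=0$ for $\nu \notin \mathbb{Z}$, after which the identity extends to every $\nu >0$ by continuity in $\nu $. Either way one arrives at (\ref{Int_IK_Meijer}).
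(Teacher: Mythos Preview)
Your proposal follows the paper's proof essentially step for step: the same Meijer-$G$ representation of $I_{\nu}(\sqrt{x})K_{\nu}(\sqrt{x})$, the same change of variables $u=t^{2}$, and the same use of (\ref{Meijer_reduction_ap}) and (\ref{D_Meijer_ap}) with $a_{p}=1$ to reach the indefinite integral $-\tfrac{1}{4\sqrt{\pi}}\,G_{2,4}^{3,1}(t^{2}\mid\ldots)$.

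The one genuine difference is how the limit at infinity is handled. The paper simply invokes (\ref{Meijer_G_z->inf}) and writes $\lim_{z\to\infty}G_{2,4}^{3,1}(\ldots)=-2\sqrt{\pi}\sin\pi\nu\lim_{z\to\infty}z^{-2}e^{-2z}=0$. You correctly observe that the hypotheses of (\ref{Meijer_G_z->inf}) as stated in the paper require $m+n\le(p+q)/2$, whereas here $m+n=4>3=(p+q)/2$; so strictly speaking that theorem does not apply, and you supply two alternative justifications --- either an appeal to the algebraic asymptotic series, or a matching against the already-established hypergeometric form (\ref{Int_Inu_Knu}). Both are sound, and your observation is a fair criticism of the paper's presentation: Luke does give asymptotic formulas covering the exponentially-decaying case $m+n>(p+q)/2$ (which is exactly why the paper obtains $e^{-2z}$ rather than the oscillatory $e^{-2iz}$ of the $G_{2,4}^{3,0}$ case), but the paper quotes only the theorem for $m+n\le(p+q)/2$ and then uses it outside its stated range. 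Your handling of this point is more careful than the paper's.
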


\begin{proof}
According to the representation \cite[Eqn. 8.4.23(19)]{Prudnikov3}%
\begin{equation*}
I_{\nu }\left( \sqrt{x}\right) K_{\nu }\left( \sqrt{x}\right) =\frac{1}{2%
\sqrt{\pi }}\, G_{1,3}^{2,1}\left( x\left\vert 
\begin{array}{c}
1/2 \\ 
0,\nu ,-\nu%
\end{array}%
\right. \right) ,
\end{equation*}%
we have the following indefinite integral%
\begin{equation*}
\int \frac{I_{\nu }\left( t\right) K_{\nu }\left( t\right) }{t}dt=\frac{1}{4%
\sqrt{\pi }}\int G_{1,3}^{2,1}\left( u\left\vert 
\begin{array}{c}
1/2 \\ 
0,\nu ,-\nu%
\end{array}%
\right. \right) \frac{du}{u},
\end{equation*}%
where we have performed the change of variables $u=t^{2}$. Now, applying the
reduction formula (\ref{Meijer_reduction_ap})\ and the derivative formula (%
\ref{D_Meijer_ap}) with $a_{p}=1$, we arrive at%
\begin{equation}
\int \frac{I_{\nu }\left( t\right) K_{\nu }\left( t\right) }{t}dt=-\frac{1}{4%
\sqrt{\pi }}\, G_{2,4}^{3,1}\left( t^{2}\left\vert 
\begin{array}{c}
1/2,1 \\ 
0,0,\nu ,-\nu%
\end{array}%
\right. \right) .  \label{Int_IK_Meijer_a}
\end{equation}%
Finally, note that, according to (\ref{Meijer_G_z->inf}), we have%
\begin{equation*}
\lim_{z\rightarrow \infty }\, G_{2,4}^{3,1}\left( z^{2}\left\vert 
\begin{array}{c}
1/2,1 \\ 
0,0,\nu ,-\nu%
\end{array}%
\right. \right) =-2\sqrt{\pi }\sin \pi \nu \lim_{z\rightarrow \infty }\, \frac{%
e^{-2z}}{z^{2}}=0,
\end{equation*}%
thus we get (\ref{Int_IK_Meijer})\ from (\ref{Int_IK_Meijer_a}), as we
wanted to prove.
\end{proof}

\begin{theorem}
$\forall \nu \in 
%TCIMACRO{\U{211d} }%
%BeginExpansion
\mathbb{R}
%EndExpansion
$ and $\mathrm{Re\,}z>0$, the following integral holds true:%
\begin{equation}
\int_{z}^{\infty }\frac{K_{\nu }^{2}\left( t\right) }{t}dt=\frac{\sqrt{\pi }%
}{4}\, G_{2,4}^{4,0}\left( z^{2}\left\vert 
\begin{array}{c}
1/2,1 \\ 
0,0,\nu ,-\nu%
\end{array}%
\right. \right) .  \label{Int_K2_Meijer}
\end{equation}
\end{theorem}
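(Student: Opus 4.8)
The plan is to follow the template already used in this subsection for (\ref{Int_Jnu_Ynu_Meijer}) and (\ref{Int_IK_Meijer}): write the integrand as a Meijer-$G$ function, integrate it in closed form by combining the reduction and the differentiation rules for $G$, and pin down the constant of integration by letting $z\to\infty$. First I would invoke the Meijer-$G$ representation of the square of the Macdonald function from \cite[Sect. 8.4.23]{Prudnikov3},
\[
K_{\nu}^{2}\!\left(\sqrt{x}\right)=\frac{\sqrt{\pi}}{2}\,G_{1,3}^{3,0}\!\left(x\left\vert\begin{array}{c}1/2\\ 0,\nu,-\nu\end{array}\right.\right),
\]
which is valid for all $\nu$ (this is the analogue of the representation used for $I_{\nu}K_{\nu}$ in the proof of (\ref{Int_IK_Meijer}), now with $m=3$, $n=0$). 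Substituting $x=t^{2}$ and using $dt/t=du/(2u)$, the indefinite integral becomes
\[
\int\frac{K_{\nu}^{2}\!\left(t\right)}{t}\,dt=\frac{\sqrt{\pi}}{4}\int G_{1,3}^{3,0}\!\left(u\left\vert\begin{array}{c}1/2\\ 0,\nu,-\nu\end{array}\right.\right)\frac{du}{u}.
\]

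Next I would raise the order of the $G$-function exactly as in the proof of (\ref{Int_IK_Meijer}): reading the reduction formula (\ref{Meijer_reduction_ap}) backwards and choosing the newly introduced parameter equal to $0$ replaces $G_{1,3}^{3,0}$ by $G_{2,4}^{4,0}\!\left(u\left\vert\begin{array}{c}1/2,0\\ 0,0,\nu,-\nu\end{array}\right.\right)$, and then the differentiation formula (\ref{D_Meijer_ap}) applied with $a_{p}=1$ (its hypothesis $n=0\le p-1=1$ being satisfied) performs the integration, giving
\[
\int\frac{K_{\nu}^{2}\!\left(t\right)}{t}\,dt=-\frac{\sqrt{\pi}}{4}\,G_{2,4}^{4,0}\!\left(t^{2}\left\vert\begin{array}{c}1/2,1\\ 0,0,\nu,-\nu\end{array}\right.\right)+C.
\]
Evaluating this between $t=z$ and $t=\infty$ then yields (\ref{Int_K2_Meijer}) as soon as the boundary value at $t=\infty$ is shown to be $0$.

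The delicate point, and the one I expect to be the main obstacle, is precisely this last limit, because $G_{2,4}^{4,0}$ has $m+n=4$ exceeding $(p+q)/2=3$, so the asymptotic estimate (\ref{Meijer_G_z->inf}) does not literally apply. However, $G_{2,4}^{4,0}$ is a Meijer-$G$ function of the purely exponentially small type $G_{p,q}^{q,0}$ with $q>p$; here $\sigma=q-p=2$, $\mu=q-m-n=0$, $\Xi_{1}-\Lambda_{1}=-3/2$, $\theta=-1$, and the corresponding single-exponential asymptotics give
\[
G_{2,4}^{4,0}\!\left(z^{2}\left\vert\begin{array}{c}1/2,1\\ 0,0,\nu,-\nu\end{array}\right.\right)\sim\sqrt{\pi}\,\frac{e^{-2z}}{z^{2}}\longrightarrow 0,\qquad z\to\infty,\ \mathrm{Re}\,z>0.
\]
Equivalently and more elementarily, differentiating the antiderivative above and applying (\ref{Meijer_reduction_ap}) once more gives $\frac{d}{dw}G_{2,4}^{4,0}\!\left(w\left\vert\begin{array}{c}1/2,1\\ 0,0,\nu,-\nu\end{array}\right.\right)=-\frac{2}{\sqrt{\pi}\,w}\,K_{\nu}^{2}\!\left(\sqrt{w}\right)$, an integrable, exponentially decaying function, so the $G$-function converges at infinity to a limit which must be its exponentially small value $0$. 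Inserting this into the evaluation of the definite integral produces (\ref{Int_K2_Meijer}). Finally I would note that both sides are even in $\nu$, and that—contrary to the hypergeometric expression obtained earlier in this section—the right-hand side remains meaningful for $\nu\in\mathbb{Z}$ and for $\nu=\pm1/2,\pm3/2$, which is exactly what the Meijer-$G$ form is designed to provide.
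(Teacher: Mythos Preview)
Your proposal is correct and follows exactly the approach the paper intends: the paper's proof simply says ``following the same steps as in the previous theorem, departing from the representation $K_{\nu}^{2}(\sqrt{x})=\frac{\sqrt{\pi}}{2}\,G_{1,3}^{3,0}\bigl(x\bigl\vert\begin{smallmatrix}1/2\\0,\nu,-\nu\end{smallmatrix}\bigr.\bigr)$, we obtain the desired result.'' Your write-up is in fact more careful than the paper's, since you correctly observe that for $G_{2,4}^{4,0}$ one has $m+n=4>(p+q)/2=3$, so the asymptotic formula (\ref{Meijer_G_z->inf}) does not literally apply, and you supply an independent justification via the exponentially small $G_{p,q}^{q,0}$ asymptotics (or equivalently via the integrability of $K_{\nu}^{2}(t)/t$); the paper does not raise this point.
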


\begin{proof}
Following the same steps as in the previous theorem, departing from the
representation \cite[Eqn. 8.4.23(27)]{Prudnikov3}%
\begin{equation*}
K_{\nu }^{2}\left( \sqrt{x}\right) =\frac{\sqrt{\pi }}{2}\, G_{1,3}^{3,0}\left(
x\left\vert 
\begin{array}{c}
1/2 \\ 
0,\nu ,-\nu%
\end{array}%
\right. \right) ,
\end{equation*}%
we obtain the desired result.
\end{proof}

According to the integral representation given in (\ref{DInu_int}), and the
results obtained in (\ref{Int_I2})\ and (\ref{Int_IK_Meijer}), we calculate
the order derivative of the modified Bessel function as,%
\begin{eqnarray}
\frac{\partial I_{\nu }\left( z\right) }{\partial \nu } &=&\frac{-\nu I_{\nu
}\left( z\right) }{2\sqrt{\pi }}\, G_{2,4}^{3,1}\left( z^{2}\left\vert 
\begin{array}{c}
1/2,1 \\ 
0,0,\nu ,-\nu%
\end{array}%
\right. \right)  \label{DInu_Meijer} \\
&&-\frac{K_{\nu }\left( z\right) \left( z/2\right) ^{2\nu }}{\Gamma
^{2}\left( \nu +1\right) }\,_{2}F_{3}\left( \left. 
\begin{array}{c}
\nu ,\nu +\frac{1}{2} \\ 
\nu +1,\nu +1,2\nu +1%
\end{array}%
\right\vert z^{2}\right) ,  \notag \\
\quad \nu &>&0,\,\mathrm{Re\,}z>0.  \notag
\end{eqnarray}

As by-product, according to (\ref{DInu_int_Apelblat})\ and (\ref{DInu_Meijer}%
), we calculate the following integral, which does not seem to be reported
in the literature,%
\begin{eqnarray}
&&\int_{0}^{\pi /2}\tan \theta \ K_{0}\left( z\sin ^{2}\theta \right) I_{\nu
}\left( z\cos ^{2}\theta \right) d\theta  \label{Int_Inu_Appleblat_resultado}
\\
&=&\frac{I_{\nu }\left( z\right) }{4\sqrt{\pi }}\, G_{2,4}^{3,1}\left(
z^{2}\left\vert 
\begin{array}{c}
1/2,1 \\ 
0,0,\nu ,-\nu%
\end{array}%
\right. \right)  \notag \\
&&+\frac{K_{\nu }\left( z\right) \left( z/2\right) ^{2\nu }}{2\nu \, \Gamma
^{2}\left( \nu +1\right) }\,_{2}F_{3}\left( \left. 
\begin{array}{c}
\nu ,\nu +\frac{1}{2} \\ 
\nu +1,\nu +1,2\nu +1%
\end{array}%
\right\vert z^{2}\right) ,\quad \nu >0.  \notag
\end{eqnarray}

Finally, according to the integral representation given in (\ref{DKnu_int}),
and the results obtained in (\ref{Int_IK_Meijer}) and (\ref{Int_K2_Meijer}),
the order derivative of the Macdonald function is%
\begin{eqnarray}
&&\frac{\partial K_{\nu }\left( z\right) }{\partial \nu }
\label{DKnu_Meijer} \\
= &&\frac{\nu }{2}\left[ \frac{K_{\nu }\left( z\right) }{\sqrt{\pi }}\, %
G_{2,4}^{3,1}\left( z^{2}\left\vert 
\begin{array}{c}
1/2,1 \\ 
0,0,\nu ,-\nu%
\end{array}%
\right. \right) -\sqrt{\pi }I_{\nu }\left( z\right) G_{2,4}^{4,0}\left(
z^{2}\left\vert 
\begin{array}{c}
1/2,1 \\ 
0,0,\nu ,-\nu%
\end{array}%
\right. \right) \right] ,  \notag \\
\quad &&\nu >0,\,\mathrm{Re\,}z>0.  \notag
\end{eqnarray}

\section{Conclusions\label{Section: Conclusions}}

We have calculated some integrals involving Bessel functions in terms of
generalized hypergeometric functions in (\ref{Int_Jnu^2b}), (\ref{Int_Jnu_Ynu}%
), and (\ref{Int_Y2}); and in terms of Meijer-$G$ functions in (\ref%
{Int_Jnu_Ynu_Meijer}) and (\ref{Int_Y2_Meijer}). These integrals have been
applied to express in closed-form the derivative of the Bessel
functions with respect to the order from integral representations given in the literature, i.e. (\ref%
{DJnu_int_Dunster})\ and (\ref{DYnu_int_Dunster}). We have expressed these results using hypergeometric functions, namely (\ref{DJnu_closed_form}) and (\ref%
{DYnu_closed_form}), and Meijer-$G$
functions, namely (\ref{DJnu_Meijer})\ and (\ref{DYnu_Meijer}). We have carried out similar
calculations to obtain closed-form expressions for the derivative of the modified Bessel functions with respect to the order, both in terms of hypergeometric functions, namely (\ref{DInu_resultado}) and (\ref{DKnu_resultado}),\ as well as
in terms of Meijer-$G$\ functions, namely (\ref{DInu_Meijer}) and (\ref%
{DKnu_Meijer}). For this purpose, we have obtained integral representations for the order
derivative of the modified Bessel functions in (\ref%
{DInu_int})\ and (\ref{DKnu_int}).

Despite the fact, the expressions given in (\ref{DJnu_closed_form}) and (\ref%
{DYnu_closed_form}) for $\partial J_{\nu }/\partial \nu $ and $\partial
Y_{\nu }/\partial \nu $\ as well as the expressions\ (\ref{DInu_resultado})
and (\ref{DKnu_resultado}) for $\partial I_{\nu }/\partial \nu $ and $%
\partial K_{\nu }/\partial \nu $ have been already calculated in \cite%
{BrychovNew} with the aid of symbolic computer algebra, we have presented
here a formal derivation which turns out to be highly non-trivial. However,
we cannot use these expressions for integral order $\nu $. Nonetheless, this
is not the case for the expressions using Meijer-$G$\ functions,
namely (\ref{DJnu_Meijer}), (\ref{DYnu_Meijer}), (\ref{DInu_Meijer}) and (%
\ref{DKnu_Meijer}). However, for non-integral order $\nu $, the former
expressions are computed much more rapidly than these latter ones ($\approx 10$
times faster).

Finally, as by-products, we have calculated two integrals in (\ref%
{Int_Jnu_Appleblat_resultado})\ and (\ref{Int_Inu_Appleblat_resultado}),
which do not seem to be reported in the literature.

%% The Appendices part is started with the command \appendix;
%% appendix sections are then done as normal sections
%% \appendix

%% \section{}
%% \label{}

%% If you have bibdatabase file and want bibtex to generate the
%% bibitems, please use
%%
%%  \bibliographystyle{elsarticle-num} 
%%  \bibliography{<your bibdatabase>}

%% else use the following coding to input the bibitems directly in the
%% TeX file.

\end{document}